\numberwithin{equation}{section}
\newtheorem{thm}{Theorem}
\newtheorem*{cor}{Corollary}
\newtheorem{lem}{Lemma}
\newcommand{\RR}{\mathcal{R}}
\newcommand{\M}{\mathbf{M}}
\newcommand{\A}{\mathbf{A}}
\newcommand{\R}{\mathbb{R}}
\newcommand{\N}{\mathbb{N}}
\newcommand{\Z}{\mathbb{Z}}
\newcommand{\bd}{\mathbf{d}}
\newcommand{\y}{\mathbf{y}}
\newcommand{\z}{\mathbf{z}}
\renewcommand{\v}{\mathbf{v}}
\newcommand{\mnu}{\boldsymbol{\nu}}
\newcommand{\mmu}{\boldsymbol{\mu}}
\newcommand{\mal}{\boldsymbol{\lambda}}
\newcommand{\x}{{\bf x}}
\newcommand{\bfP}{\mathbb{P}}
\newcommand{\bfA}{\mathbb{A}}
\newcommand{\ma}{\mathbf}
\newcommand{\ben}{\begin{enumerate}}
\newcommand{\een}{\end{enumerate}}
\newcommand{\beq}{\begin{equation}}
\newcommand{\eeq}{\end{equation}}
\newcommand{\ve}{\varepsilon}
\newcommand{\del}{\delta}
\newcommand{\mcal}{\mathcal}
\newcommand{\lab}{\label}
\newcommand{\al}{\alpha}
\newcommand{\la}{\lambda}
\newcommand{\sfl}{\mathsf{\Lambda}}
\newcommand{\sfg}{\mathsf{\Gamma}}
\newcommand{\colt}[2]{\genfrac{}{}{0pt}{1}{#1}{#2}}
\newcommand{\tolt}[3]{\colt{#1}{\colt{#2}{#3}}}
\newcommand{\eqm}[2]{\equiv #1 \pmod{#2}}
\renewcommand{\d}{\mathrm{d}}
\renewcommand{\leq}{\leqslant}
\renewcommand{\geq}{\geqslant}
\renewcommand{\le}{\leqslant}
\renewcommand{\ge}{\geqslant}
\theoremstyle{definition}
\newtheorem*{ack}{Acknowledgements}
\newtheorem*{notat}{Notation}
\renewcommand{\SS}{\mathcal{S}}
\DeclareMathOperator{\hcf}{gcd}
\DeclareMathOperator{\meas}{meas}
\begin{document}

\title{Binary linear forms as sums of two squares}

\author{R. de la Bret\`eche}
\author{T.D. Browning}

\address{
Institut de Math\'ematiques de Jussieu,
Universit\'e Paris 7 Denis Diderot,
Case Postale 7012,
2, Place Jussieu,
F-75251 Paris cedex 05}
\email{breteche@math.jussieu.fr}

\address{School of Mathematics,
University of Bristol, Bristol BS8 1TW}
\email{t.d.browning@bristol.ac.uk}

\date{\today}

\subjclass[2000]{11N37 (11D25, 11N25)}
\begin{abstract}
We revisit recent work of Heath-Brown on the average
order of the quantity $r(L_1(\x))\cdots r(L_4(\x))$, 
for  suitable binary linear forms $L_1,\ldots, L_4$, 
as $\x=(x_1,x_2)$ ranges over quite general regions in $\Z^2$. In
addition to improving the error term in Heath-Brown's estimate
we generalise his result to cover a wider class of linear forms.
\end{abstract}

\maketitle

\section{Introduction}

Let $L_1,\ldots,L_4\in\Z[x_1,x_2]$ be binary linear forms,
and let $\mcal{R}\subset \R^2$ be any bounded region.
This paper is motivated  by the question of determining conditions on
$L_1,\ldots,L_4$ and $\mcal{R}$ under which it is possible to establish
an asymptotic formula for the sum
$$
  S(X):=\sum_{\x=(x_1,x_2)\in \Z^2\cap X\mcal{R}}r(L_1(\x))r(L_2(\x))r(L_3(\x)) r(L_4(\x)),
$$
as $X\rightarrow \infty$, where $X\mcal{R}:= \{X\x: \x\in \mcal{R}\}.$  
The problem of determining an upper bound for $S(X)$ is substantially
easier. In fact the main result in the authors' recent investigation 
\cite{nair}
into the average order of arithmetic functions over the values of binary forms can
easily be used to show that
$
S(X)\ll X^2,
$
provided that no two of $L_1,\ldots,L_4$ are proportional. 
In trying to establish an asymptotic formula for $S(X)$ there is no
real loss in generality in restricting ones attention to
the corresponding sum in which one of the variables $x_1,x_2$ is
odd. For $j \in \{*,0,1\}$, let us write $S_{j}(X)$ for the 
corresponding 
sum in which $x_1$ is odd and $x_2 \equiv j \bmod{2}$, where the case
$j=*$ means that no $2$-adic restriction is placed on $x_2$.

Our point of departure is recent work of Heath-Brown \cite{h-b03},
which establishes an asymptotic formula for $S_*(X)$ when
$L_1,\ldots,L_4$ and $\mcal{R}$ satisfy the following normalisation
hypothesis:
\begin{enumerate}
\item[(i)]
$\mcal{R}$ is an open, bounded and convex region, with a piecewise continuously
differentiable boundary,
\item[(ii)]
no two of $L_1,\ldots,L_4$ are proportional,
\item[(iii)]
$L_i(\x)>0$ for all $\x
\in \mcal{R}$,
\item[(iv)]
we have $L_i(\x)\equiv x_1 \bmod{4}$.
\end{enumerate}
Here, as throughout our work, the index $i$ denotes a generic
element of the set $\{1,2,3,4\}$.
We will henceforth say that $L_1,\ldots,L_4, \mcal{R}$ ``satisfy  \textsf{NH}$_0$'' if these
four conditions hold.
The first three conditions are all quite natural, and don't impose any
serious constraint on $L_1,\ldots,L_4, \mcal{R}$. The fourth condition is
more problematic however, especially when it comes to applying the
result in other contexts.
We will return to this issue shortly. For the moment we
concern ourselves with presenting a refinement of Heath-Brown's
result.  It will be necessary to begin by introducing some more 
notation. 

For given $L_1,\ldots,L_4,\mcal{R}$ we 
will write 
\begin{equation}
  \label{eq:Linf}
L_\infty=L_\infty(L_1,\ldots,L_4):=\max_{1\leq i\leq 4}\|L_i\|,
\end{equation}
where $\|L_i\|$ denotes the maximum
modulus of the coefficients of $L_i$, and
\begin{equation}
  \label{eq:rinf}
r_\infty=r_\infty(\mcal{R}):=\sup_{\x\in\mcal{R}}\max\{|x_1|,|x_2|\}.
\end{equation}
Furthermore, let
\begin{equation}
  \label{eq:r'}
r'=r'(L_1,\ldots,L_4,\mcal{R}):=\sup_{\x\in\mcal{R}}\max_{1\leq i \leq
4}|L_i(\ma{x})|.
\end{equation} 
Define the real number
\begin{equation}\label{defeta} 
\eta:=1-\frac{1+\log\log 2}{\log 2},
\end{equation}
with numerical value $0.08607\ldots$, and let $\chi$ be the
non-principal character modulo $4$ defined multiplicatively by
$$
\chi(p):=\left\{
\begin{array}{ll}
+1, & \mbox{if $p\equiv 1\bmod{4}$},\\
-1, & \mbox{if $p\equiv 3\bmod{4}$},\\
0, & \mbox{if $p=2$}.
\end{array}
\right.
$$
We are now ready to reveal our first result. 

\begin{thm}\lab{main0}
Assume that $L_1,\ldots,L_4,\mcal{R}$ satisfy \textsf{NH}$_0$, and 
let $\varepsilon>0$.  Suppose that $r'X^{1-\ve}\geq 1$. 
Then we have
$$
S_{*}(X)
= 4\pi^4 \meas(\mcal{R})X^2 \prod_{p>2}\sigma_p^* +
O\Big(\frac{L_\infty^{ \varepsilon}r_\infty r'X^2}{(\log X)^{ \eta-\varepsilon}}\Big),
$$
where 
\begin{equation}
   \label{eq:sig*}
\sigma_p^*:=\Big(1-\frac{\chi(p)}{p}\Big)^4
\sum_{a,b,c,d=0}^{\infty}\chi(p)^{a+b+c+d}\rho_*(p^a,p^b,p^c,p^d)^{-1},
\end{equation}
and
\begin{equation}
  \label{eq:rho*}
\rho_*(\ma{h}):=\det\{\x\in\Z^2: h_i\mid L_i(\x)\}
\end{equation}
as a sublattice of $\Z^2$.
Moreover, the product $\prod \sigma_p^*$ is absolutely convergent.
\end{thm}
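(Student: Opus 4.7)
The plan is to adapt the standard attack on such sums. First I would expand each factor using the identity $r(n)=4\sum_{d\mid n}\chi(d)$, valid for $n\geq 1$. Condition (iii) guarantees $L_i(\x)>0$ on $X\mcal{R}$, so
\[
S_*(X)=256\sum_{\bd\in\N^4}\chi(d_1d_2d_3d_4)N_*(\bd;X),
\]
where $N_*(\bd;X)$ counts $\x\in\Z^2\cap X\mcal{R}$ with $x_1$ odd and $d_i\mid L_i(\x)$ for all $i$. The $\chi$ factor restricts the sum to odd $\bd$. The geometric object driving the analysis is the sublattice $\Lambda(\bd)\subset\Z^2$ cut out by the congruences $d_i\mid L_i(\x)$; its index is $\rho_*(\bd)$ from \eqref{eq:rho*}, a multiplicative function of $\bd$.

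I would then introduce a cutoff $D=D(X)$ and split the $\bd$-sum according to whether $\max_id_i\leq D$. On the small range, a lattice-point estimate in the convex body $X\mcal{R}$ (condition (i)) gives $\#(\Lambda(\bd)\cap X\mcal{R})=X^2\meas(\mcal{R})/\rho_*(\bd)+E(\bd;X)$, where $E(\bd;X)$ depends on the successive minima of $\Lambda(\bd)$. Imposing $x_1$ odd contributes a factor $1/2$ since $\bd$ is odd. Assembling the leading terms,
\[
128\,\meas(\mcal{R})X^2\sum_{\bd}\chi(d_1\cdots d_4)/\rho_*(\bd)
\]
factorises as an Euler product whose $p$-factor, after inserting the convergence factor $(1-\chi(p)/p)^4$, is precisely $\sigma_p^*$, yielding the claimed main term. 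The contribution from $\max_id_i>D$ in this Euler product is absorbed into the error.

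The heart of the proof is bounding the tail contribution from $\bd$ with $\max_id_i>D$: reversing summation, this is dominated by the sum over $\x\in\Z^2\cap X\mcal{R}$ of $|r(L_1(\x))\cdots r(L_4(\x))|$ restricted to those $\x$ for which some $L_i(\x)$ has a divisor exceeding $D$. I would control this by the Shiu-type bound of the authors' companion paper \cite{nair} on averages of multiplicative functions over values of linear forms. The precise constant $\eta=1-(1+\log\log 2)/\log 2$ emerges from optimising $D$: one chooses $D$ so that the boundary error $\sum_{\max_id_i\leq D}|E(\bd;X)|$ on the small range balances the savings from \cite{nair} on the tail, producing the log-power $(\log X)^{\eta-\varepsilon}$.

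The main obstacle is obtaining this explicit exponent with the correct uniformity in $L_\infty$, $r_\infty$ and $r'$: a worst-case bound on $E(\bd;X)$ is insufficient, so an averaging argument keyed to the determinant and successive minima of $\Lambda(\bd)$ is needed, and the application of \cite{nair} must be arranged to track the dependence on the heights of the $L_i$ and on the shape of $\mcal{R}$. Absolute convergence of $\prod_p\sigma_p^*$ is then routine: for $p$ outside a finite set of bad primes (those dividing a resultant $\mathrm{Res}(L_i,L_j)$), the lattice $\Lambda(p^a,p^b,p^c,p^d)$ splits cleanly and a direct expansion of \eqref{eq:sig*} gives $\sigma_p^*=1+O(p^{-2})$.
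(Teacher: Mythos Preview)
Your outline has a genuine gap in the treatment of the tail, and the origin you ascribe to the constant $\eta$ is not the correct one.

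With a single cutoff $D$ on $\max_i d_i$, the tail after taking absolute values is
\[
\sum_{\substack{\bd\in\N^4\\ \max_i d_i>D}} N_*(\bd;X)
\;=\;\sum_{\x}\#\{\bd:\ d_i\mid L_i(\x),\ \max_i d_i>D\}.
\]
But for $D<r'X$ (which is forced, since $d_i\le L_i(\x)\le r'X$), the condition ``$L_i(\x)$ has a divisor exceeding $D$'' is satisfied whenever $L_i(\x)>D$, i.e.\ for essentially all $\x\in X\mcal{R}$. So the inner count is not small on average; it is of size $\tau(L_1(\x))\cdots\tau(L_4(\x))$, and the Shiu--type bound from \cite{nair} only returns $O(X^2(\log X)^A)$. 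No choice of $D$ balances this against the boundary error, and $\eta$ cannot emerge this way.

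What the paper actually does is use the hyperbola method before introducing any cutoff. For $m\equiv 1\bmod 4$ one writes $r(m)=4A_+(m)+4A_-(m)$, splitting the divisor sum at $\sqrt{X'}$ so that the running divisor variable is always $\le X'^{1/2}$; this is applied to $L_1,L_2,L_3$. For $L_4$ one uses a three-way split $r(m)=4B_+(m)+4C(m)+4B_-(m)$ with a secondary parameter $Y$. The sixteen sums $S_{\pm,\pm,\pm,\pm}$ then have all four divisor variables of size at most $X'^{1/2}$ (resp.\ $Y$), and a level-of-distribution lemma (an averaged lattice-point estimate over $\bd$, based on Daniel's lemma) handles them with error $O(L_\infty^\varepsilon r_\infty r'X^2/\log X)$ once $Y=X'^{1/2}/(\log X')^{A}$. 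The entire saving beyond $1/\log X$ sits in the remaining piece
\[
S_0=\sum_{\x}r(L_1(\x))r(L_2(\x))r(L_3(\x))\,C(L_4(\x)),\qquad
C(m)=\sum_{\substack{d\mid m\\ Y<d\le X'/Y}}\chi(d),
\]
and $\eta=1-\frac{1+\log\log 2}{\log 2}$ comes from bounding $\sum_{m\le X'}|C(m)|$ via the Hall--Tenenbaum theory of integers with a divisor in a prescribed interval (a weighted second-moment estimate for $C(m)$ together with the large-deviation bound for $\Omega(m)$). This is a different mechanism from ``optimising a cutoff $D$'', and it is the step you are missing.
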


The implied constant in this estimate is allowed to depend upon the
choice of $\varepsilon$, a convention that we will adopt 
for all of the implied constants in this paper.
It would be straightforward to replace the term $(\log X)^\ve$ by 
$(\log\log X)^A$ in the error term, for some explicit value of $A$.  
For the purposes of comparison, we note that \cite[Theorem 1]{h-b03} 
consists of an asymptotic formula for $S_*(X)$ with error
$$
O_{L_1\ldots,L_4,\mcal{R}}\Big( \frac{X^2(\log\log
X)^{15/4}}{(\log X)^{\eta/2}}\Big). 
$$ 
Here there is an unspecified dependence on $L_1,\ldots,L_4,\mcal{R}$, 
and  $\eta$ is given by \eqref{defeta}.  
Thus Theorem \ref{main0} is stronger than  \cite[Theorem 1]{h-b03}
in two essential aspects. Firstly, we have been able to obtain complete
uniformity in $L_1,\ldots,L_4,\mcal{R}$ in the error term, and
secondly, our exponent of $\log X$ is almost twice the size.

Our next result extends Theorem \ref{main0} to 
points running over vectors belonging to suitable sublattices of
$\Z^2$. The advantages of such a generalisation will be made clear 
shortly.  
For any $\ma{D}=(D_1,\ldots,D_4) \in \N^4$, we let
\begin{equation}
   \label{eq:lattice}
\sfg_{\ma{D}}=\sfg(\ma{D};L_1,\ldots,L_4):= \{\x\in\Z^2: D_i\mid L_i(\x)\}.
\end{equation}
Then $\sfg_{\ma{D}}\subseteq \Z^2$ is an integer lattice of
rank $2$. Next, let $\ma{d}=(d_1,\ldots,d_4)\in\N^4$ and assume that $d_i\mid D_i$.  In particular it follows that
$
\sfg_{\ma{D}}\subseteq \sfg_{\ma{d}}.
$
Throughout this paper we will focus our attention on $(\ma{d},\ma{D})
\in \mcal{D}$, where
\begin{equation}
   \label{eq:D}
\mcal{D}:= \big\{
(\ma{d}, \ma{D})\in \N^8: 2\nmid d_iD_i,  ~d_i\mid D_i
\big\}.
\end{equation}
For $j \in \{*,0,1\}$ the goal is to establish
an asymptotic formula for
\begin{equation}
   \label{eq:Sj}
S_{j}(X;\ma{d},\sfg_{\ma{D}}):=\sum_{\tolt{\x\in \sfg_{\ma{D}}\cap
     X\mcal{R}}{2\nmid x_1}{x_2\equiv j \bmod{2}}}
r\Big(\frac{L_1(\x)}{d_1}\Big)
r\Big(\frac{L_2(\x)}{d_2}\Big)r\Big(\frac{L_3(\x)}{d_3}\Big)
r\Big(\frac{L_4(\x)}{d_4}\Big).
\end{equation}
It is clear that
$S_{j}(X)=S_j(X;(1,1,1,1),\Z^2)$ for each $j \in \{*,0,1\}$, in the
above notation.

For given $\ma{d}\in \N^4$ with odd components, let us 
say that $L_1,\ldots,L_4,\mcal{R}$ ``satisfy \textsf{NH}$_0(\ma{d})$''
if they satisfy the conditions in \textsf{NH}$_0$, but with (iv) replaced
by 
\begin{enumerate} 
\item[(iv)$_{\ma{d}}$]
we have $L_i(\x)\equiv d_i x_1 \bmod{4}$.
\end{enumerate}
When $d_i\equiv 1 \bmod 4$ for each $i$, it is clear that
(iv)$_\ma{d}$ coincides with (iv).
Let $[a,b]$ denote the least common multiple of any two positive
integers $a,b$. 
The results that we obtain involve the quantity
\begin{equation}
   \label{eq:rho0}
\rho_0(\ma{h})
:=\frac{\det
 \sfg\big(([D_1,d_1h_1],\ldots,[D_4,d_1h_4]);L_1,\ldots,L_4\big)}{\det \sfg(\ma{D};L_1,\ldots,L_4)},
  \end{equation}
which we will occasionally denote by $\rho_0(\ma{h};\ma{D};L_1,\ldots,L_4)$.
Specifically we have local factors
\begin{equation}
   \label{eq:sig}
\sigma_p:=\Big(1-\frac{\chi(p)}{p}\Big)^4
\sum_{a,b,c,d=0}^{\infty}\chi(p)^{a+b+c+d}\rho_0(p^a,p^b,p^c,p^d)^{-1},
\end{equation}
defined for any prime $p>2$. In view of \eqref{eq:sig*} and \eqref{eq:rho*}, we note that
$\rho_0(\ma{h})=\rho_*(\ma{h})$ and $\sigma_p=\sigma_p^*$ when $D_i=1$, 
since then $\sfg_{\ma{D}}=\Z^2$. 
Bearing all this notation in mind, we have the following result.

\begin{thm}\lab{main1}
Let $(\ma{d},\ma{D})
\in \mcal{D}$ and assume that $L_1,\ldots,L_4,\mcal{R}$ satisfy \textsf{NH}$_0(\ma{d})$.
Let $\varepsilon>0$ and suppose that $r'X^{1-\ve}\geq 1$. 
Let $j \in \{*,0,1\}$. Then we have
$$
S_{j}(X;\ma{d},\sfg_{\ma{D}})
= \frac{\delta_j\pi^4 \meas(\mcal{R})}{\det \sfg_{\ma{D}}}X^2 \prod_{p>2}\sigma_p +
 O\Big(\frac{D^\ve L_\infty^{ \varepsilon}r_\infty r'X^2}{(\log X)^{ \eta-\varepsilon}}\Big),
$$
where $D:=D_1D_2D_3D_4$ and 
\begin{equation}
   \label{eq:dj}
\delta_j:=\left\{
\begin{array}{ll}
2, & \mbox{if $j=0,1$},\\
4, & \mbox{if $j=*$},
\end{array}
\right.
\end{equation}
and $L_\infty, r_\infty, r'$ 
 are given by \eqref{eq:Linf},
\eqref{eq:rinf} and \eqref{eq:r'},  respectively.
Moreover, the product $\prod \sigma_p$ is absolutely convergent.
\end{thm}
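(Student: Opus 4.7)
The plan is to adapt the proof of Theorem~\ref{main0}, which is a special case of Theorem~\ref{main1}, so as to track the dependence on $(\ma{d},\ma{D},j)$ explicitly. The overall strategy, following \cite{h-b03}, will be to expand each factor $r(L_i(\ma{x})/d_i)$ as a divisor sum via $r(n)=4\sum_{m\mid n}\chi(m)$ for odd $n$, interchange summations, and then separate the main term from off-diagonal error.

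First I would verify that hypothesis~(iv)$_{\ma{d}}$, combined with $\ma{x}\in\sfg_{\ma{D}}$ (which forces $d_i\mid L_i(\ma{x})$) and the oddness of $d_i$, makes each $L_i(\ma{x})/d_i$ an odd integer congruent to $x_1$ modulo~$4$ whenever $2\nmid x_1$. Expanding the divisor sum and swapping summations then gives
\begin{equation*}
S_j(X;\ma{d},\sfg_{\ma{D}}) = 4^4 \sum_{\ma{e}\in\N^4} \chi(\ma{e})\,N_j(X;\ma{e}),
\end{equation*}
where $\chi(\ma{e}):=\chi(e_1)\chi(e_2)\chi(e_3)\chi(e_4)$ (vanishing unless all $e_i$ are odd) and $N_j(X;\ma{e})$ counts $\ma{x}\in\sfg_{([D_1,e_1d_1],\dots,[D_4,e_4d_4])}\cap X\mcal{R}$ subject to the 2-adic constraints $2\nmid x_1$ and $x_2\equiv j\bmod 2$. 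I would evaluate $N_j(X;\ma{e})$ by standard counting of lattice points in convex regions, obtaining a principal term inversely proportional to $\rho_0(\ma{e})\det\sfg_{\ma{D}}$ via \eqref{eq:rho0}, with the 2-adic restriction contributing a prefactor that assembles into $\delta_j$ as in \eqref{eq:dj} (since the odd-determinant sublattice $\sfg_{([D_1,e_1d_1],\dots)}$ surjects onto $(\Z/2)^2$). Summing these principal contributions over $\ma{e}$, the resulting Dirichlet series factors as an Euler product over odd primes, and after applying $L(1,\chi)=\pi/4$ and the definition \eqref{eq:sig} it collapses to the claimed $\delta_j\pi^4\meas(\mcal{R})X^2(\det\sfg_{\ma{D}})^{-1}\prod_{p>2}\sigma_p$.

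The hard part will be the error analysis: the $\ma{e}$-sum must be truncated, and both the tail of the Dirichlet series and the accumulated lattice-counting errors must be kept under control uniformly in $(\ma{d},\ma{D})$. I would proceed as for Theorem~\ref{main0}, leaning on the uniform upper bounds for averages of $r$-like multiplicative functions over values of binary forms established in \cite{nair}; the new ingredient is to verify that those bounds persist under passage to the sublattice $\sfg_{\ma{D}}$ with only a factor that is polynomial in $D$, which is then absorbed into the advertised $D^\ve$. Absolute convergence of $\prod\sigma_p$ will follow, exactly as in Theorem~\ref{main0}, from the elementary lower bound $\rho_0(p^a,p^b,p^c,p^d)\ge p^{a+b+c+d-O(1)}$ valid at all but finitely many odd primes (those dividing the resultants of pairs among $L_1,\dots,L_4$), which forces $\sigma_p=1+O(p^{-2})$.
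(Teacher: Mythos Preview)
Your plan differs substantially from the paper's. You propose to rerun the entire machinery behind Theorem~\ref{main0} with the sublattice constraint $\ma{x}\in\sfg_{\ma{D}}$ and the parameters $\ma{d}$, $j$ carried along throughout. The paper instead \emph{reduces} Theorem~\ref{main1} to Theorem~\ref{main0} by a short change-of-basis argument: pick a minimal basis $\ma{e}_1,\ma{e}_2$ for $\sfg_{\ma{D}}$, adjust it (using that $\det\sfg_{\ma{D}}$ is odd) so that $e_{11}\equiv 1\bmod 4$ and $4\mid e_{21}$, and observe that the transformed forms $M_i(\v)=d_i^{-1}L_i(v_1\ma{e}_1+v_2\ma{e}_2)$ satisfy $M_i(\v)\equiv v_1\bmod 4$. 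This puts the sum exactly into the shape handled by Theorem~\ref{main0}; the only bookkeeping left is that $\|\M\|\ll\det\sfg_{\ma{D}}\le D$, that $r'$ is preserved, that $r_\infty(\mcal{R}_\M)\ll r_\infty$, and that the local factor $\sigma_p^*$ built from the $M_i$ equals $\sigma_p$ for the $L_i$ (a straightforward index calculation). The cases $j\in\{0,1\}$ are then deduced from $j=*$ via one more matrix of determinant~$2$ and Lemma~\ref{lem:linear}.

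Your route is plausible but would oblige you to reprove all of \S4 --- the $A_\pm,B_\pm,C$ decomposition, the level-of-distribution Lemma~\ref{lem21}, and especially the delicate estimates for $S_0$ in Lemmas~\ref{l:2311.4} and~\ref{l:2311.5} --- in the sublattice setting, checking at every stage that the dependence on $\ma{D}$ stays polynomial. The paper's change of basis gets all of this for free once Theorem~\ref{main0} is established: the lattice constraint is absorbed into the coordinates rather than threaded through the analysis.
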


Taking $d_i=D_i=1$ and $j=*$ in the statement of Theorem~\ref{main1},
so that in particular $\sfg_{\ma{D}}=\Z^2$, we retrieve Theorem \ref{main0}.
In fact Theorem \ref{main1} is a rather routine deduction from
Theorem~\ref{main0}. This will be carried out in \S \ref{lattices}.

We now return to the normalisation conditions (i)--(iv)$_\ma{d}$ that form the
basis of  Theorem \ref{main1}. As indicated above, one of the main  
motivations behind writing this paper has been to weaken these conditions
somewhat. In fact we will be able to replace condition (iv)$_{\ma{d}}$ by either of 
\begin{enumerate} 
\item[(iv$'$)$_{\ma{d}}$]
the coefficients of $L_3,L_4$ are all non-zero and 
there exist integers $k_1,k_2\geq 0$ such that
$$
2^{-k_1}L_1(\x)\equiv d_1x_1 \pmod{4}, \quad 2^{-k_2}L_2(\x)\equiv d_2x_1 \pmod{4},
$$
\end{enumerate}
or
\begin{enumerate}
\item[(iv$''$)$_{\ma{d}}$]
the coefficients of $L_3,L_4$ are all non-zero and 
there exist integers $k_1,k_2\geq 0$ such that
$$
2^{-k_1}L_1(\x)\equiv d_1x_1 \pmod{4}, \quad 2^{-k_2}L_2(\x)\equiv x_2 \pmod{4}.
$$
\end{enumerate}
Accordingly, we will say that $L_1,\ldots,L_4, \mcal{R}$ ``satisfy
\textsf{NH}$_1(\ma{d})$'' if they satisfy conditions (i)--(iii) and (iv$'$)$_{\ma{d}}$,
and we will say that  $L_1,\ldots,L_4, \mcal{R}$ ``satisfy
\textsf{NH}$_2(\ma{d})$'' if 
together with (i)--(iii), they satisfy condition~(iv$''$)$_{\ma{d}}$. 
The condition that  none of the coefficients of $L_3,L_4$ are zero is
equivalent to the statement that neither $L_3$ nor $L_4$ is
proportional to $x_1$ or $x_2$. Condition (ii) ensures that no two of $L_1,\ldots,L_4$
are proportional, and so if $L_3$ or $L_4$ is proportional to one of
$x_1$ or $x_2$, then there are at least two forms among $L_1,\ldots,L_4$
that are not proportional to $x_1$ or $x_2$. After a possible relabeling, 
therefore, one may always assume that the coefficients 
of $L_3,L_4$ are non-zero.

The asymptotic formula that we obtain under these new 
hypotheses is more complicated than  Theorem \ref{main1}, and
intimately depends on the coefficients of $L_3, L_4$.
Suppose that
\begin{equation}\label{L3L4}
L_3(\x)=a_3x_1+b_3x_2,\quad L_4(\x)=a_4x_1+b_4x_2,
\end{equation}
and write
$$
\A=\Big( \begin{array}{cc}
a_3&b_3\\
a_4 &b_4
\end{array}
\Big),
$$
for the associated matrix.  In particular for $L_1,\ldots,L_4$
satisfying any of the normalisation conditions above, we may assume
that $\A$ is an integer valued matrix with non-zero determinant and
non-zero entries.

Let $(j,k)\in \{*,0,1\}\times\{0,1,2\}$. 
We proceed to introduce a quantity
$\delta_{j,k}(\A,\ma{d})\in \R$, which 
will correspond to the $2$-adic density of vectors $\x\in \Z^2$ with
$x_1\equiv 1 \bmod 4$  
and $x_2 \equiv j \bmod{2}$, for which the
corresponding summand in \eqref{eq:Sj} is non-zero for $L_1,\ldots,L_4,\mcal{R}$ 
satisfying \textsf{NH}$_k(\ma{d})$. 
Let 
\begin{equation} 
  \label{eq:Ar}
E_n:=\{x\in  \Z/2^n\Z : ~\exists ~\nu\in\Z_{\geq 0}, ~
  2^{-\nu}x\equiv 1 \bmod{4}\}, 
\end{equation}
for any $n \in \N$. Then we may set 
\begin{equation}\label{eq:sig2}
\delta_{j,k}(\A,\ma{d}):=\lim_{n\to\infty} \frac{1}{2^{2n-4}} \#\left\{
\x\in(\Z/2^n\Z)^2 :
\begin{array}{l}
x_1\equiv 1 \bmod 4\\ 
x_2\equiv j \bmod{2}\\ 
L_i(\x)\in d_iE_n 
\end{array}\right\}.
\end{equation}
This limit plainly always exists and is contained in the interval $[0,4]$.
It will ease notation if we simply write $\delta_{j,k}(\A)$ for
$\delta_{j,k}(\A,\ma{d})$ in all that follows. We will calculate
this quantity explicitly in \S \ref{2-adic}. We are 
now ready to reveal our main result. 

\begin{thm}\lab{main2} 
Let $(\ma{d},\ma{D}) \in \mcal{D}$ and assume that $L_1,\ldots,L_4,\mcal{R}$
satisfy \textsf{NH}$_k(\ma{d})$ 
 for $k\in \{0,1,2\}$. 
Let $\varepsilon>0$ and suppose that $r'X^{1-\ve}\geq 1$. 
Let $j \in \{*,0,1\}$. Then we have
$$
S_j(X;\ma{d},\sfg_{\ma{D}})
=cX^2 + O\Big(\frac{D^\ve L_\infty^{ \varepsilon}r_\infty r'X^2}{(\log X)^{ \eta-\varepsilon}}\Big),
$$
where 
$$
c=
\delta_{j,k}(\A)\frac{\pi^4 \meas(\mcal{R})}{\det \sfg_{\ma{D}}} \prod_{p>2}\sigma_p.
$$
\end{thm}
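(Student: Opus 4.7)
The strategy is to reduce Theorem~\ref{main2} to Theorem~\ref{main1} by a $2$-adic partition of the summation variable $\x$, exploiting the invariance $r(2n)=r(n)$. Each summand $r(L_i(\x)/d_i)$ depends only on the odd part of $L_i(\x)/d_i$, and under NH$_k(\ma{d})$ for $k\in\{1,2\}$ the $2$-adic valuations $v_2(L_i(\x))$ for $i=1,2$ are rigidly controlled: they are constant (NH$_1$) or depend only on $v_2(x_2)$ (NH$_2$) for $\x$ with $x_1$ odd. For $i=3,4$, the non-vanishing of the coefficients of $L_3,L_4$ ensures that $v_2(L_i(\x))$ is determined by the residue class of $\x$ modulo a bounded power of $2$, apart from a thin exceptional set.

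Fix $M$ larger than $k_1$, $k_2$, and the $2$-adic valuations of the coefficients of $L_3,L_4$, with fixed additional slack. Since each $D_i$ is odd, $\sfg_{\ma{D}}$ has full $2$-adic density, so $\sfg_{\ma{D}}=\bigsqcup_{\ma{a}}(\ma{a}+2^M\sfg_{\ma{D}})$ with $\ma{a}$ ranging over $(\Z/2^M\Z)^2$. On each coset with $a_1\equiv 1\bmod 2$ and $a_2\equiv j\bmod 2$, set $e_i=v_2(L_i(\ma{a}))$ and $f_i=L_i(\ma{a})/2^{e_i}$. The substitution $\x=\ma{a}+2^M\z$ together with $r(2n)=r(n)$ transforms the summand into $\prod_i r(\tilde L_{i,\ma{a}}(\z))$, where $\tilde L_{i,\ma{a}}(\z)=f_i/d_i+2^{M-e_i}L_i(\z)/d_i$ is an integer affine form in $\z$, constant modulo $4$ by the choice of $M$. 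If some $f_i/d_i\equiv 3\bmod 4$, the summand is identically zero on this coset; otherwise—when all $f_i/d_i\equiv 1\bmod 4$—the sub-sum has the form studied in Theorem~\ref{main1}, applied (after translating $\z$ to absorb the constants into modified lattice and congruence data) to the rescaled region $X''\mcal{R}''$ with $X''\sim X/2^M$.

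Applying Theorem~\ref{main1} to each non-vanishing sub-sum and aggregating over the $O(2^{2M})$ cosets reproduces the main term $cX^2$: by the definition \eqref{eq:sig2} the proportion of "good" cosets, suitably normalized, is exactly $\delta_{j,k}(\A,\ma{d})$, and the factor $\pi^4\meas(\mcal{R})/\det\sfg_{\ma{D}}\cdot\prod_{p>2}\sigma_p$ is preserved through the reduction. The summed error terms are bounded by $O\bigl(2^{2M}\cdot D^\ve L_\infty^\ve r_\infty r'(X/2^M)^2/(\log X)^{\eta-\ve}\bigr)=O\bigl(D^\ve L_\infty^\ve r_\infty r'X^2/(\log X)^{\eta-\ve}\bigr)$, as required. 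The principal obstacle is the passage from the affine forms $\tilde L_{i,\ma{a}}$ to the homogeneous linear setting of Theorem~\ref{main1}: one must carefully verify that the constant terms $f_i/d_i$ can be absorbed into modified lattice and $\ma{d}$-data so that the resulting configuration satisfies NH$_0(\ma{d}')$, which in turn requires the explicit $2$-adic computation of $\delta_{j,k}(\A,\ma{d})$ promised for \S\ref{2-adic}. A secondary complication is the treatment of cosets for which $L_i(\ma{a})\equiv 0\bmod 2^M$ for some $i\in\{3,4\}$; these form a set of $\ma{a}$-density $O(2^{-M})$ and their contribution is controlled by the crude upper bound $S\ll X^2$ deriving from \cite{nair}.
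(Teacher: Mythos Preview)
Your general plan --- reduce to Theorem~\ref{main1} by a $2$-adic decomposition of the summation, exploit $r(2n)=r(n)$, and handle exceptional residue classes via the upper bounds of \cite{nair} --- is the same as the paper's. The implementation, however, contains a genuine gap.

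The difficulty is your ``principal obstacle''. After the affine substitution $\x=\ma{a}+2^M\z$ you obtain forms $\tilde L_{i,\ma{a}}(\z)=f_i/d_i+2^{M-e_i}L_i(\z)/d_i$ that are genuinely affine, and no translation of $\z$ can make all four of them homogeneous simultaneously: this would require $L_i(\ma{a}-2^M\z_0)=0$ for every $i$, hence $\ma{a}=2^M\z_0$, contradicting the fact that $a_1$ is odd. Nor can the constants be hidden inside new $(\ma{d}',\ma{D}')$-data, since Theorem~\ref{main1} only treats expressions $r(L_i(\x)/d_i)$ with $L_i$ linear and the divisibility recorded by a sublattice condition on $\x$; an additive constant is a different object. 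So the sub-sums you produce are not of the shape to which Theorem~\ref{main1} applies. A secondary issue is that with $M$ chosen \emph{fixed} (as you say), the exceptional cosets where $2^M\mid L_3(\ma{a})$ or $2^M\mid L_4(\ma{a})$ contribute a positive proportion of the whole sum, so the crude bound $S\ll X^2$ gives only $O(2^{-M}X^2)$, which is not admissible; one is forced to let $M\to\infty$ with $X$, and then the role of ``fixed $M$'' in your main-term identification via \eqref{eq:sig2} becomes delicate.

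The paper resolves both problems by decomposing not into cosets modulo a single $2^M$, but according to the $2$-adic valuations $\xi_j=\nu_2(2^{-k_j}L_j(\x))$ for $j=3,4$ (which are unbounded). For each fixed pair $(\xi_3,\xi_4)$ the relevant constraint on $\x$ is a \emph{homogeneous} congruence of the type $x_2\equiv A x_1\bmod 2^{\xi+2}$, parametrized by the linear map $\x=\M\y$ with $\M=\bigl(\begin{smallmatrix}1&0\\A&2^{\xi+2}\end{smallmatrix}\bigr)$; thus $L_i(\M\y)$ remains linear in $\y$ and, after stripping a power of $2$, satisfies \textsf{NH}$_0(\ma{d})$. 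Lemma~\ref{lem:linear} together with Theorem~\ref{main1} then gives the contribution for each $(\xi_3,\xi_4)$, the tail in $\xi$ is controlled exactly as in \eqref{queue}, and the infinite sums over $\xi_3,\xi_4$ are evaluated case by case (according to the parities of the exponents $\mu_j,\nu_j$ in \eqref{eq:L34}) to recover the explicit values of $\delta_{1,1}(\A)$ recorded in \S\ref{2-adic}. The passages between the values of $j$ and between $k=1$ and $k=2$ are then effected by further linear substitutions $\M_\xi$ and $\M_{c,d_2}$, again via Lemma~\ref{lem:linear}.
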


It is rather trivial to check that $\delta_{j,0}(\A)=\delta_j$, 
in the notation of \eqref{eq:dj}. 
Hence the statement of Theorem~\ref{main2} reduces to Theorem
\ref{main1} when $k=0$.  The proof of Theorem \ref{main2} for $k=1,2$ uses Theorem~\ref{main1} as a
crucial ingredient, but it will be significantly more complicated than
the corresponding deduction of Theorem~\ref{main1} from Theorem
\ref{main0}. This will be carried out in \S \ref{s:t2}.
The underlying idea is to find appropriate linear transformations
that take the relevant linear forms into forms that satisfy the
normalisation conditions (i)--(iv)$_{\ma{d}}$, thereby
bringing the problem in line for an application of Theorem~\ref{main1}.
In practice the choice of transformation depends closely upon the 
coefficients of $L_3, L_4$, and a careful case by case analysis is necessary
to deal with all eventualities.

While interesting in its own right, the study of sums like
\eqref{eq:Sj} is intimately related to problems involving the 
distribution of integer and rational points on algebraic varieties. 
In fact estimating $S_j(X;\ma{d},\sfg_{\ma{D}})$ boils 
down to counting integer points on the affine variety
\begin{equation}
  \label{eq:torsor}
L_i(x_1,x_2)=d_i(s_i^2+t_i^2), \quad (1\leq i\leq 4),
\end{equation}
in $\bfA^{10}$, with $x_1,x_2$ restricted in some way.
Viewed in this light it might be expected that the constant $c$ in Theorem
\ref{main2} admits an interpretation as a product of local
densities. Our next goal is to show that this is indeed the case.

Let $\mal=(\lambda_1,\ldots, \lambda_4)\in\Z_{\geq 0}^4$
and let $\mmu=(\mu_1,\ldots, \mu_4)\in\Z_{\geq 0}^4$. 
Given any prime $p>2$, let
$$
N_{\mal,\mmu}(p^n):=\#\Big\{(\x,\ma{s},\ma{t})\in
(\Z/p^n\Z)^{10}: \begin{array}{l}
L_i(x_1,x_2)\equiv p^{\lambda_i}(s_i^2+t_i^2) \bmod{p^n}\\
p^{\mu_i} \mid L_i(x_1,x_2)
\end{array}
\Big\},
$$
and define 
\begin{equation}
  \label{eq:def-sigp}
\omega_{\mal,\mmu}(p):=\lim_{n\rightarrow \infty}p^{-6n-\la_1-\cdots-\la_4}N_{\mal,\mmu}(p^n).
\end{equation}
This corresponds to the $p$-adic density on a variety of the
form \eqref{eq:torsor}, in which the points are restricted to lie on a
certain sublattice of $\Z/p^n\Z$.

Turning to the case $p=2$, let
$$
N_{j,k,\bd}(2^n):=\#\Big\{(\x,\ma{s},\ma{t})\in
(\Z/2^n\Z)^{10}: \begin{array}{l}
L_i(x_1,x_2)\equiv d_i(s_i^2+t_i^2) \bmod{2^n}\\
x_1 \equiv 1 \bmod{4}, ~x_{2}\equiv j \bmod{2}
\end{array}
\Big\},
$$
for any $(j,k) \in \{*,0,1\}\times\{0,1,2\}$  and any $\bd\in\N^4$ such that $2\nmid
d_1\cdots d_4$.  Here the subscript $k$ indicates that 
$L_1,\ldots,L_4,\mcal{R}$ are assumed to satisfy \textsf{NH}$_k(\ma{d})$.
The corresponding $2$-adic density is given by 
\begin{equation}
  \label{eq:eq:def-dig2}
\omega_{j,k,\bd}(2):=\lim_{n\rightarrow \infty}2^{-6n}N_{j,k,\bd}(2^n).
\end{equation}
Finally, we let $\omega_{\mcal{R}}(\infty)$ denote the archimedean density of
solutions  to the system of equations
\eqref{eq:torsor}, for which $(\x,\ma{s},\ma{t})\in\mcal{R}\times
\R^{8}$. 
We will establish the 
following result in \S \ref{s:local}.

\begin{thm}\lab{main3} 
We have 
$$
c=\omega_{\mcal{R}}(\infty)
\omega_{j,k,\ma{d}}(2)\prod_{p>2}\omega_{\mal,\mmu}(p),
$$
in the statement of Theorem \ref{main2}, with
\begin{align*}
\mal=\big(\nu_p(d_{1}),\ldots,\nu_p(d_{4})\big),
\quad \mmu=\big(\nu_p(D_{1}),\ldots,\nu_p(D_{4})\big).
\end{align*}
\end{thm}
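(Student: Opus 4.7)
The plan is to evaluate each of the three kinds of local densities on the right-hand side of the claimed formula and then verify that their product equals the constant $c$ of Theorem~\ref{main2}. The three computations proceed independently, and the global check reduces to bookkeeping of normalisation factors.

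For the archimedean density, I write $\omega_{\mcal{R}}(\infty)$ as the Leray-type integral $\int_{\mcal{R}\times\R^8}\prod_{i=1}^4\delta(L_i(\x)/d_i-(s_i^2+t_i^2))\,d\x\,d\ma{s}\,d\ma{t}$, where the equation is rescaled by $d_i$ to match the normalisation implicit in the factor $p^{-\la_1-\cdots-\la_4}$ of~\eqref{eq:def-sigp}. Polar coordinates in each $(s_i,t_i)$-plane reduce each inner integral to $\pi$ (using $L_i(\x)>0$ on $\mcal{R}$ by hypothesis~(iii)), giving $\omega_{\mcal{R}}(\infty)=\pi^4\meas(\mcal{R})$.

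For each odd prime $p$, I decompose $N_{\mal,\mmu}(p^n)$ as a sum over $\x\bmod p^n$ with $p^{\mu_i}\mid L_i(\x)$, of the product over $i$ of $\#\{(s,t)\bmod p^n:p^{\la_i}(s^2+t^2)\equiv L_i(\x)\bmod p^n\}$. The inner count equals $p^{n+\la_i}\sigma_p(L_i(\x)/p^{\la_i})$ up to lower-order terms, where $\sigma_p(m):=(1-\chi(p)/p)\sum_{a\ge 0,\,p^a\mid m}\chi(p)^a$ is the classical local density of $s^2+t^2=m$ at $p$. Substituting, expanding the geometric sum, and swapping orders of summation yields a sum indexed by $\ma{a}\in\Z_{\geq 0}^4$ in which the outer count of $\x\bmod p^n$ is subject to $p^{\max(\mu_i,\la_i+a_i)}\mid L_i(\x)$. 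This count is $p^{2n}$ divided by the $p$-part of the index of $\sfg(([D_i,d_ip^{a_i}]);L_1,\ldots,L_4)$, which by~\eqref{eq:rho0} equals $\det_p\sfg_{\ma{D}}\cdot\rho_0(p^{a_1},\ldots,p^{a_4})$. After the $p^{-2n}$ factors cancel, I conclude $\omega_{\mal,\mmu}(p)=\sigma_p/\det_p\sfg_{\ma{D}}$.

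For the $2$-adic density, the oddness of all $d_i$ (from $(\ma{d},\ma{D})\in\mcal{D}$) makes each $d_i$ a unit mod $2^n$, so the equation becomes $s_i^2+t_i^2\equiv L_i(\x)/d_i\bmod 2^n$; a direct check shows $\#\{(s,t)\bmod 2^n:s^2+t^2\equiv m\}=2^{n+1}$ whenever $m\in E_n$ (for $n>\nu_2(m)$) and vanishes otherwise. Consequently $N_{j,k,\ma{d}}(2^n)\sim 2^{4(n+1)}\cdot 2^{2n-4}\delta_{j,k}(\A)=\delta_{j,k}(\A)\cdot 2^{6n}$ by~\eqref{eq:sig2}, so $\omega_{j,k,\ma{d}}(2)=\delta_{j,k}(\A)$. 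Finally, since all $D_i$ are odd, $\det_2\sfg_{\ma{D}}=1$, whence $\prod_{p>2}\det_p\sfg_{\ma{D}}=\det\sfg_{\ma{D}}$; multiplying the three local densities gives $\omega_{\mcal{R}}(\infty)\,\omega_{j,k,\ma{d}}(2)\prod_{p>2}\omega_{\mal,\mmu}(p)=\delta_{j,k}(\A)\pi^4\meas(\mcal{R})\prod_{p>2}\sigma_p/\det\sfg_{\ma{D}}=c$, as required. The main obstacle is the careful reconciliation of normalisation conventions across the various places so that the $d_i$-factors cancel correctly and the local contributions combine into exactly the constant appearing in Theorem~\ref{main2}.
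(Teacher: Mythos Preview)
Your proposal is correct and the archimedean and $2$-adic computations match the paper's essentially verbatim. The genuine difference lies in your treatment of $\omega_{\mal,\mmu}(p)$ for odd $p$. The paper splits into the cases $p\equiv 1\bmod 4$ and $p\equiv 3\bmod 4$, inserts the explicit formulae \eqref{eq:s0-1} and \eqref{eq:s0-3} for $S_0(A;p^n)$, fixes the valuations $\nu_i=\nu_p(L_i(\x))$, and then performs an inclusion--exclusion over $\ma{e}\in\{0,1\}^4$ to pass from ``$\nu_p(L_i(\x))=\nu_i$'' to ``$p^{\nu_i+e_i}\mid L_i(\x)$'' before reindexing. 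You bypass both the case split and the inclusion--exclusion by writing $S_0(A;p^n)/p^n$ uniformly (for $\nu_p(A)<n$) as the character sum $(1-\chi(p)/p)\sum_{a\ge 0,\,p^a\mid A}\chi(p)^a$, then interchanging the sums over $\x$ and over $\ma{a}$ directly. This is cleaner: the paper's two lemmas collapse into a single identity, and the combinatorial bookkeeping with the $e_i$ disappears. The price is that your phrase ``up to lower-order terms'' absorbs the boundary contribution from those $\x\bmod p^n$ with some $\nu_p(L_i(\x))\ge n-\la_i$; this is exactly the $O(n^4p^{5n})$ error the paper tracks explicitly, and you should at least indicate why it is negligible after dividing by $p^{6n}$.
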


It turns out that the system of equations in \eqref{eq:torsor} play 
the role of descent varieties for the pair of equations
$$
L_1(x_1,x_2)L_2(x_1,x_2)=x_3^2+x_4^2, \quad 
L_3(x_1,x_2)L_4(x_1,x_2)=x_5^2+x_6^2,
$$
for binary linear forms $L_1,\ldots,L_4$ defined over $\Z$. 
This defines a geometrically integral threefold $V\subset \bfP^5$, and it is
natural to try and estimate the number $N(X)$ of rational points 
on $V$ with height at most $X$, as $X\rightarrow \infty.$  In fact there is a
very precise conjecture due to Manin \cite{fmt} which relates the
growth of $N(X)$ to the intrinsic geometry of $V$. It is easily
checked that $V$ is a singular
variety with finite singular locus consisting of double points. If $\widetilde{V}$ denotes the minimal
desingularisation of $V$, then the Picard group of $\widetilde{V}$ has
rank $1$. Moreover, $K_{\widetilde{V}}+2H$ is effective,
where $K_{\widetilde{V}}$ is a canonical divisor and $H$ is a hyperplane
section. Thus Manin's conjecture predicts the asymptotic behaviour 
$
N(X)=c_V
X^2(1+o(1)),
$
as $X\rightarrow \infty$, for a suitable constant $c_V\geq 0$.

Building on his 
investigation \cite[Theorem 1]{h-b03} into the sum $S_*(X)$ defined above, Heath-Brown provides
considerable evidence for this conjecture when
$L_1,\ldots,L_4, \mcal{R}$ satisfy a certain normalisation hypothesis, which he
labels \textbf{NC2}. This coincides with the conditions (i)--(iii) in
\textsf{NH}$_0$, but with (iv) replaced by the condition that
$$
L_1(\x)\equiv L_2(\x) \equiv \nu x_1 \pmod{4},
\quad
L_3(\x)\equiv L_4(\x) \equiv \nu' x_1 \pmod{4},
$$
for appropriate $\nu,\nu' =\pm 1.$
The outcome of Heath-Brown's investigation is \cite[Theorem
2]{h-b03}. Under \textbf{NC2} this establishes the existence of a constant $c\geq 0$ and a function
$E(X)=o(X^2)$, such that
\begin{equation}
  \label{eq:bell}
\sum_{\colt{\x \in \Z^2\cap X \mcal{R}}{x_1\equiv 1\bmod{2}}} 
r(L_1(\x)L_2(\x))r(L_3(\x)L_4(\x))=cX^2+O(E(X)).
\end{equation}
The explicit value of $c$ is rather complicated to state and will not
be given here. One of the features of Heath-Brown's proof is that it doesn't
easily lead to an explicit error function $E(X)$.
An examination of the proof reveals that this can be traced back to an
argument involving dominated convergence in the proof of \cite[Lemma
6.1]{h-b03}, thereby allowing Heath-Brown to employ \cite[Theorem 1]{h-b03}, which 
is not uniform in any of the relevant 
parameters.  Rather than using \cite[Theorem~1]{h-b03} to estimate the sums
$S(d,d')$ that occur in his analysis, however, it is possible to employ our
Theorem \ref{main1}. The advantage in doing so is that the
corresponding error term is completely uniform in
the parameters $d,d'$, thus circumventing the need for the
argument involving dominated convergence. 
Rather than labouring the details, we will content ourselves with merely recording the outcome of this
observation here.

\begin{cor}
One has $E(X)=X^2(\log X)^{-\eta/3+\ve}$ in \eqref{eq:bell}, for any $\ve>0$.
\end{cor}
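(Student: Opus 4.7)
The plan is to revisit Heath-Brown's proof of \cite[Theorem 2]{h-b03} and modify only the step where he invokes \cite[Theorem 1]{h-b03}. His starting point is a divisor-type identity that expresses $r(L_1(\x)L_2(\x))r(L_3(\x)L_4(\x))$ as a weighted sum, indexed by odd auxiliary divisors $d, d'$, of four-fold products $r(L_1(\x)/d_1)\cdots r(L_4(\x)/d_4)$ with $\x$ restricted to a sublattice $\sfg_\ma{D} \subseteq \Z^2$ (where $\ma{d}, \ma{D}$ depend polynomially on $d,d'$). After swapping the order of summation, each inner sum is precisely of the form $S_*(X;\ma{d},\sfg_\ma{D})$ appearing in \eqref{eq:Sj}. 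This is the point at which Heath-Brown appeals to his Theorem~1 together with dominated convergence in \cite[Lemma 6.1]{h-b03}, thereby losing all quantitative control on the error.

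First I would feed each such sum $S(d,d') := S_*(X;\ma{d},\sfg_\ma{D})$ directly into our Theorem~\ref{main1}. The crucial input is the uniform error term $D^\ve L_\infty^\ve r_\infty r' X^2 (\log X)^{-\eta+\ve}$, which depends on $\ma{D},\ma{d}$ only through the polynomial factor $D^\ve$. This uniformity is exactly what is needed to bypass dominated convergence and perform the sum over $d,d'$ by hand.

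Next, introduce a cutoff $Y$ to be chosen as a suitable power of $\log X$. For $dd' \le Y$ one applies Theorem~\ref{main1} term by term: the main terms reassemble to Heath-Brown's constant $cX^2$ (up to an absolutely convergent tail coming from Euler-product bounds on the $\sigma_p$), while the accumulated error is $Y^{O(\ve)} X^2 (\log X)^{-\eta + \ve}$. For $dd' > Y$, one estimates the contribution directly using Cauchy--Schwarz together with a mean value bound of the type available in \cite{nair} for $\sum r^2$ on values of binary linear forms, which gives a tail of the shape $X^2 (\log Y)^{-\alpha}$ for some explicit $\alpha > 0$.

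Balancing the two contributions by taking $Y$ of polynomial size in $\log X$ and tracking the $\ve$-losses carefully then delivers $E(X) \ll X^2 (\log X)^{-\eta/3 + \ve}$. The main obstacle will be the bookkeeping for the tail: obtaining the exponent $\eta/3$ rather than some other fraction of $\eta$ requires the correct $\alpha$, which in turn reflects the number of auxiliary divisor summations in the identity decomposing $r(L_1L_2)r(L_3L_4)$ together with the $D^\ve$ tolerance in Theorem~\ref{main1}. Beyond this combinatorial bookkeeping, the argument is a direct substitution of our uniform result into Heath-Brown's pre-existing framework, and no new ideas beyond those already developed in this paper are needed.
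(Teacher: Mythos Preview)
Your proposal is correct and follows the same strategy the paper indicates: replace Heath-Brown's non-uniform \cite[Theorem 1]{h-b03} in the estimation of the sums $S(d,d')$ by our Theorem~\ref{main1}, whose $D^\ve$-uniform error term lets one sum over $d,d'$ directly and bypass the dominated-convergence step in \cite[Lemma~6.1]{h-b03}. The paper itself does not spell out the details (``rather than labouring the details, we will content ourselves with merely recording the outcome''), so your cutoff-and-tail treatment is more explicit than what appears here, but the underlying idea is identical.
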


In addition to the threefold $V\subset \bfP^5$ defined above, it turns out that the estimates in this
paper can play an important  role in analysing the arithmetic of other rational varieties.
Indeed, one of the motivating factors behind writing this paper has
been to prepare the way for a verification of the Manin conjecture for
certain surfaces of the shape
$$
x_1x_2=x_3^2, \quad x_3(ax_1+bx_2+cx_3) =x_3^2+x_4^2,
$$
in forthcoming joint work with Emmanuel Peyre.
These equations define singular del Pezzo surfaces of degree $4$ in
$\bfP^4$, of the type first considered by Iskovskikh. These are arguably the
most interesting examples of singular quartic del Pezzo surfaces since
they are the only ones for which weak approximation may fail.
On solving the first equation in integers, and substituting into the
second equation, one is led to consider the family of equations
$$
h^2y_1y_2(ay_1^2+by_2^2+cy_1y_2) =s^2+t^2,
$$
for $h$ running over a suitable range. Studying the distribution of
integer solutions to this system of equations therefore 
amounts to estimating sums of the shape
$$
\sum_{y_1,y_2}r(h^2y_1y_2(ay_1^2+by_2^2+cy_1y_2)),
$$
uniformly in $h$. By choosing $a,b,c$ such that $c^2-4ab$ is a square,
one can show that this sum is related to sums of the sort
\eqref{eq:Sj}, but for which Heath-Brown's original
normalisation conditions in \textsf{NH}$_0$ are no longer met. 
Thus we have found it desirable to generalise the work of \cite{h-b03} to the extent enjoyed in the
present paper.

As a final remark we note that at the expense of extra work further
generalisations of our main results are possible. For example it would
not be difficult to extend the work to deal with analogues of \eqref{eq:Sj} in which 
$r$ is replaced by a $r_\Delta$-function that counts
representations as norms of elements belonging to an arbitrary imaginary
quadratic field of discriminant $\Delta$. 

\begin{notat}
Throughout our work $\N$ will denote the set of positive
integers.  Moreover,  we will follow
common practice and allow the arbitrary small parameter $\varepsilon>0$ to
take different values at different parts of the argument.
All order constants will be allowed to depend on $\ve$. 
\end{notat}

\begin{ack} 
The authors are grateful to G\'erald Tenenbaum for 
discussions that have led to the overall improvement in the error term of
Theorem \ref{main0}, and to Emmanuel Peyre for discussions relating to
the interpretation of the constant in Theorem \ref{main3}. 
Part of this work was undertaken while the 
second author was visiting the first author at the {\em Universit\'e
  de Paris VII},  the hospitality and financial support 
of which is gratefully acknowledged.
\end{ack}

\section{Interpretation of the constant}\label{s:local}

Our task in this section is to establish Theorem \ref{main3}.  
We begin with some preliminary facts. 
Let $A\in\Z$ and let $\al\in \Z_{\geq 0}$. For any prime power $p^n$, we write 
\begin{equation}
  \label{eq:sum2}
  S_\al(A;p^n):=\#\{(x, y) \in(\Z/p^n\Z)^2: p^{\al}(x^2+y^2) \equiv A \bmod{p^n}\}.
\end{equation}
If $\al\leq n$ then it is not hard to see that
\begin{equation}
  \label{eq:al-0}
S_\al(A;p^n)=p^{2\al} S_0(A/p^\al;p^{n-\al}),
\end{equation}
when $\al\leq \nu_p(A)$ and $S_\al(A;p^n)=0$ otherwise. In the case
$\al=0$ we have
\begin{equation}
  \label{eq:s0-1}
S_0(A;p^n)=\left\{\begin{array}{ll}
p^n+np^n(1-1/p), &\mbox{if $\nu_p(A)\geq n$},\\
(1+\nu_p(A))p^{n}(1-1/p),
& \mbox{if $\nu_p(A)<n$},
\end{array}
\right.
\end{equation}
when $p\equiv 1\bmod{4}$. This formula has been employed by Heath-Brown
\cite[\S 8]{h-b03} in a similar context. When $p\equiv 3 \bmod{4}$, he
notes that
\begin{equation}
  \label{eq:s0-3}
S_0(A;p^n)=\left\{\begin{array}{ll}
p^{2[n/2]}, &\mbox{if $\nu_p(A)\geq n$},\\
p^{n}(1+1/p), & \mbox{if $\nu_p(A)<n$ and $2\mid \nu_p(A)$},\\
0, & \mbox{if $\nu_p(A)<n$ and $2\nmid \nu_p(A)$}.
\end{array}
\right.
\end{equation}
Finally, when $p=2$ and $n\geq 2$, we have
\begin{equation}
  \label{eq:s0-2}
S_0(A;2^n)=\left\{\begin{array}{ll}
2^{n+1}, & \mbox{if $2^{-\nu_2(A)}A\equiv 1\bmod{4}$,}\\
0, & \mbox{otherwise.}
\end{array}
\right.
\end{equation}
Note that Heath-Brown states this formula only for odd $A$ that are
congruent to $1$ modulo $4$, but the general case is easily checked.
Indeed, if $\nu=\nu_2(A)$, then one notes that
$2\mid \hcf(x,y)$ in the definition of $S_0(A;2^n)$ if $\nu\geq 2$,
and $2\nmid xy$ if $\nu=1$. In the
former case one therefore has $S_0(A;2^n)=4S_0(A/4;2^{n-2})$, and in
the latter case one finds that $S_0(A;2^n)=2^{n+1}$.

Let $L_1,\ldots,L_4\in\Z[x_1,x_2]$  be arbitrary linear forms, and
recall the definition \eqref{eq:rho*} of the determinant $\rho_*(\ma{h})$.
It follows from the multiplicativity of $\rho_*$ that 
$$
\frac{1}{\det \sfg_{\ma{D}}}
\prod_{p>2}\sigma_p =\prod_{p>2} c_p
$$
in the statement of Theorem \ref{main2}, with 
$$
c_p =\Big(1-\frac{\chi(p)}{p}\Big)^4
\sum_{n_i\geq 0}\frac{\chi(p)^{n_1+\cdots+n_4}}{\rho_*(p^{\max\{\nu_p(D_1),
    \nu_p(d_1)+n_1\}},\ldots, p^{\max\{\nu_p(D_4), \nu_p(d_4)+n_4\}})}.
$$
We claim that 
\begin{equation}
  \label{eq:claim-un}
  c_p=\omega_{\mal,\mmu}(p),
\end{equation}
for each $p>2$, where $\omega_{\mal,\mnu}(p)$ is given by 
\eqref{eq:def-sigp} and the values of $\mal,\mnu$ are as in the
statement of Theorem \ref{main3}. The proof of this claim will be in
two steps:  the case $p\equiv 1\bmod{4}$ and the
case $p\equiv 3\bmod 4$.

\begin{lem}\label{lem:p=1}
Let $p\equiv 1\bmod{4}$ be a prime. Then \eqref{eq:claim-un} holds.
\end{lem}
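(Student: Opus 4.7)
The plan is to compute $\omega_{\mal,\mmu}(p)$ directly from its definition \eqref{eq:def-sigp} and recognise the result as $c_p$. For fixed $n$, I would begin by summing over $\x\in(\Z/p^n\Z)^2$ first and then over $(\ma s,\ma t)$. For each admissible $\x$ (i.e., with $p^{\mu_i}\mid L_i(\x)$), the inner count of $(s_i,t_i)$ solving $L_i(\x)\equiv p^{\lambda_i}(s_i^2+t_i^2)\bmod p^n$ is precisely $S_{\lambda_i}(L_i(\x);p^n)$ from \eqref{eq:sum2}. Hence
$$
N_{\mal,\mmu}(p^n)=\sum_{\substack{\x\in(\Z/p^n\Z)^2\\ p^{\mu_i}\mid L_i(\x)}}\prod_{i=1}^4 S_{\lambda_i}(L_i(\x);p^n).
$$

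Since $\lambda_i\le\mu_i\le\nu_p(L_i(\x))$ under the constraint, identity \eqref{eq:al-0} reduces each factor to $p^{2\lambda_i}S_0(L_i(\x)/p^{\lambda_i};p^{n-\lambda_i})$. For $p\equiv 1\bmod 4$, the non-truncated case of \eqref{eq:s0-1} yields, whenever $\nu_p(L_i(\x))<n$,
$$
S_{\lambda_i}(L_i(\x);p^n)=p^{n+\lambda_i}\Bigl(1-\tfrac{1}{p}\Bigr)\bigl(1+\nu_p(L_i(\x))-\lambda_i\bigr).
$$
Plugging in and dividing by $p^{6n+\lambda_1+\cdots+\lambda_4}$, the main term of $p^{-6n-\sum\lambda_i}N_{\mal,\mmu}(p^n)$ becomes
$$
p^{-2n}\Bigl(1-\tfrac{1}{p}\Bigr)^{\!4}\sum_{\x\in(\Z/p^n\Z)^2}\prod_{i=1}^4\bigl(1+\nu_p(L_i(\x))-\lambda_i\bigr)\bigl[p^{\mu_i}\mid L_i(\x)\bigr].
$$

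The central manoeuvre is the identity $1+\nu_p(L_i(\x))-\lambda_i=\sum_{k_i\ge 0}[p^{k_i+\lambda_i}\mid L_i(\x)]$, which combined with $[p^{\mu_i}\mid L_i(\x)]$ collapses to $[p^{\max(\mu_i,k_i+\lambda_i)}\mid L_i(\x)]$. Swapping orders of summation and invoking the lattice count
$$
\#\bigl\{\x\in(\Z/p^n\Z)^2:p^{a_i}\mid L_i(\x)\ (1\le i\le 4)\bigr\}=\frac{p^{2n}}{\rho_*(p^{a_1},\ldots,p^{a_4})},
$$
valid once $p^{a_i}$ does not exceed $p^n$ in a suitable sense, produces exactly the expression for $c_p$ after recalling $\chi(p)=1$ and $\lambda_i=\nu_p(d_i)$, $\mu_i=\nu_p(D_i)$.

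The principal obstacle is the passage to the limit $n\to\infty$: one must bound the contribution of those $\x$ for which $\nu_p(L_i(\x))\ge n$ (where \eqref{eq:s0-1} gives the alternative formula $p^n+np^n(1-1/p)$, introducing an $n$-growing factor) and simultaneously truncate the $k_i$-sum at some $K=K(n)\to\infty$ in a manner compatible with the absolute convergence of the defining series for $c_p$. The latter convergence is already guaranteed by Theorem \ref{main1}, so a routine tail estimate comparing $\rho_*(p^{a_1},\ldots)$ to products $p^{a_1+\cdots+a_4}$ for large $a_i$ should close the argument without difficulty.
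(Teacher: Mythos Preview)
Your proposal is correct and proceeds from the same starting point as the paper---summing first over $\x$ and expressing the $(s_i,t_i)$-count as a product of $S_{\lambda_i}(L_i(\x);p^n)$, then invoking \eqref{eq:al-0} and \eqref{eq:s0-1}---but the central manipulation differs. The paper fixes the \emph{exact} valuations $\nu_i=\nu_p(L_i(\x))$, uses inclusion--exclusion over $\ma e\in\{0,1\}^4$ to rewrite the count of $\x$ with $\nu_p(L_i(\x))=\nu_i$ in terms of the ``at least'' quantities $\rho_*(p^{\nu_1+e_1},\ldots,p^{\nu_4+e_4})^{-1}$, and then performs a change of variables $n_i=\nu_i+e_i-\lambda_i$ followed by a short case analysis of the telescoping sums $\sum_{0\le e\le\min\{1,\lambda+n-M\}}(-1)^e(1+n-e)$. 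Your route, expanding $(1+\nu_p(L_i(\x))-\lambda_i)$ directly as $\sum_{k_i\ge 0}[p^{k_i+\lambda_i}\mid L_i(\x)]$ and merging with the $\mu_i$-constraint, is shorter and avoids the inclusion--exclusion step altogether; this works cleanly here precisely because $\chi(p)=1$ makes the weight a simple count of divisors. The paper's detour through exact valuations is not wasted effort, however: the same template (now with a parity condition on $\nu_i-\lambda_i$ in place of the factor $(1+\nu_i-\lambda_i)$) is reused verbatim in the companion lemma for $p\equiv 3\bmod 4$, where a direct divisor-sum expansion in your style is less immediate. Your handling of the limit $n\to\infty$ (bounding the deep-valuation terms, truncating the $k_i$-sum, and appealing to convergence of the series for $c_p$) matches what the paper does implicitly via the $O(n^4p^{5n})$ error term.
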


\begin{proof}
Let $A\in\Z$, and let $p\equiv 1\bmod{4}$ be a prime.
On combining \eqref{eq:s0-1} with \eqref{eq:al-0} it follows that
$$
S_\al(A;p^n)=(1+\nu_p(A)-\al)p^{n+\al}(1-1/p),
$$
provided that $\al\leq \nu_p(A)<n$. 
Our plan will be to fix $p$-adic valuations $\nu_i$ of
$L_i(\x)$,  and to then use this formula to count the resulting
number of $\ma{s},\ma{t} \in (\Z/p^n\Z)^{4}$ in $N_{\mal,\mmu}(p^n)$. 
Note that we must have 
$$
\nu_i\geq M_i:=\max\{\lambda_i, \mu_i\}.
$$ 
It follows that
\begin{align*}
N_{\mal,\mmu}(p^n)
=&p^{4n+\la_1+\cdots+\la_4}
\Big(1-\frac{1}{p}\Big)^4\sum_{\nu_i\geq M_i}M_{\mnu}(p^n)\prod_{1\leq
  i\leq 4}(1+\nu_i-\lambda_i)\\
&\quad+O(n^4p^{5n}),
\end{align*}
where $M_{\mnu}(p^n)$ counts the number of $\x \bmod{p^n}$ such that 
$p^{\mu_i}\mid L_i(\x)$ and $\nu_p(L_i(\x))=\nu_i$.
But then 
\begin{align*}
M_{\mnu}(p^n)
&=\sum_{\ma{e}\in \{0,1\}^4} (-1)^{e_1+\cdots +e_4} \#\big\{\x
\bmod{p^n}: p^{\max\{\nu_i+e_i, \mu_i\}}\mid L_i(\x)\big\} \\
&=\sum_{\ma{e}\in \{0,1\}^4} (-1)^{e_1+\cdots +e_4} \#\big\{\x
\bmod{p^n}: p^{\nu_i+e_i}\mid L_i(\x)\big\} \\
&=p^{2n}\sum_{\ma{e}\in \{0,1\}^4} \frac{(-1)^{e_1+\cdots +e_4}}{
\rho_*(p^{\nu_1+e_1}, \ldots, p^{\nu_4+e_4})}.
\end{align*}
Making the change of variables $n_i=\nu_i+e_i-\lambda_i$, and noting that
$\nu_i+e_i\geq M_i+e_i\geq M_i$, we therefore deduce that
\begin{align*}
\sigma_{\mal,\mmu}(p)
=&\Big(1-\frac{1}{p}\Big)^4\sum_{n_i\geq M_i-\lambda_i }
\rho_*(p^{\la_1+n_1}, \ldots, p^{\la_4+n_4})^{-1}\\
&\quad \times
\sum_{0\leq e_i\leq \min\{1,\la_i+n_i-M_i\}} (-1)^{e_1+\cdots +e_4}
\prod_{1\leq i\leq 4}(1+n_i-e_i).
\end{align*}
Now it is clear that 
$$
\sum_{0\leq e\leq \min\{1,\la+n-M\}} 
\hspace{-0.2cm}
(-1)^{e}
(1+n-e)=
\left\{
\begin{array}{ll}
1,& \mbox{if $\la+n-M\geq 1$},\\
1+M-\la,& \mbox{if $\la+n-M=0$}.
\end{array}
\right.
$$
Since $1+M-\la=\#\Z\cap [0,M-\lambda]$, a little thought reveals that
\begin{align*}
\sigma_{\mal,\mmu}(p)
&=\Big(1-\frac{1}{p}\Big)^4\sum_{n_i\geq 0}
\rho_*(p^{\max\{M_1,\la_1+n_1\}}, \ldots,
p^{\max\{M_4,\la_4+n_4\}})^{-1}\\
&=\Big(1-\frac{1}{p}\Big)^4\sum_{n_i\geq 0}
\rho_*(p^{\max\{\mu_1,\la_1+n_1\}}, \ldots, p^{\max\{\mu_4,\la_4+n_4\}})^{-1}.
\end{align*}
This completes the proof of the lemma.
\end{proof}

\begin{lem}
Let $p\equiv 3\bmod{4}$ be a prime. Then \eqref{eq:claim-un} holds.
\end{lem}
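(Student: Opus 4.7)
The argument parallels the proof of Lemma~\ref{lem:p=1}, but uses \eqref{eq:s0-3} in place of \eqref{eq:s0-1} and accounts for the stricter non-vanishing condition when $p\equiv 3\bmod 4$. Combining \eqref{eq:s0-3} with \eqref{eq:al-0}, one has
$$
S_\al(A;p^n)=p^{n+\al}\Big(1+\frac{1}{p}\Big),
$$
provided $\al\le \nu_p(A)<n$ and $\nu_p(A)-\al$ is even, and $S_\al(A;p^n)=0$ otherwise; the contributions from $\nu_p(A)\ge n$ are absorbed into a negligible error of order $O(n^{4}p^{5n})$, as in the previous lemma. Fixing $\nu_i=\nu_p(L_i(\x))$, the product $\prod_i S_{\la_i}(L_i(\x);p^n)$ is non-zero only when $\nu_i\geq M_i:=\max\{\la_i,\mu_i\}$ and $\nu_i\equiv\la_i\bmod 2$ for each $i$, in which case it equals $(1+1/p)^4 p^{4n+\la_1+\cdots+\la_4}$.

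Following the same inclusion-exclusion step as in the proof of Lemma~\ref{lem:p=1}, we arrive at
$$
\omega_{\mal,\mmu}(p)=\Big(1+\frac{1}{p}\Big)^4\sum_{\substack{\nu_i\ge M_i\\ \nu_i\equiv \la_i \bmod 2}}\sum_{\ma{e}\in\{0,1\}^4}\frac{(-1)^{e_1+\cdots+e_4}}{\rho_*(p^{\nu_1+e_1},\ldots,p^{\nu_4+e_4})}.
$$
I would then reindex by $n_i=\nu_i+e_i-\la_i$. For each fixed $n_i\ge 0$, the parity condition $\nu_i\equiv \la_i\bmod 2$ forces $e_i\equiv n_i\bmod 2$, thereby determining $e_i\in\{0,1\}$ uniquely with sign $(-1)^{e_i}=(-1)^{n_i}$. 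The remaining constraint $\nu_i\ge M_i$ translates to $n_i\ge M_i-\la_i$ when $n_i$ is even and to $n_i\ge M_i-\la_i+1$ when $n_i$ is odd. A short case analysis then shows that the coefficient of $\rho_*(p^{\la_1+n_1},\ldots,p^{\la_4+n_4})^{-1}$ equals $(-1)^{n_i}$ when $n_i>M_i-\la_i$, equals $1$ when $n_i=M_i-\la_i$ and $M_i-\la_i$ is even, and vanishes otherwise.

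On the other hand, since $\chi(p)=-1$ we may write
$$
c_p=\Big(1+\frac{1}{p}\Big)^4\sum_{n_i\ge 0}\frac{(-1)^{n_1+\cdots+n_4}}{\rho_*(p^{\max\{\mu_1,\la_1+n_1\}},\ldots,p^{\max\{\mu_4,\la_4+n_4\}})}.
$$
For each index $i$, the $i$-th argument of $\rho_*$ is constantly $p^{M_i}$ throughout the range $0\le n_i\le M_i-\la_i$, so the corresponding partial sum $\sum_{n=0}^{M_i-\la_i}(-1)^n$ telescopes to $1$ if $M_i-\la_i$ is even and to $0$ otherwise; the remaining terms $n_i>M_i-\la_i$ carry coefficient $(-1)^{n_i}$ and $i$-th $\rho_*$-argument $p^{\la_i+n_i}$. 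These contributions match term by term those obtained above for $\omega_{\mal,\mmu}(p)$.

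The main delicacy lies in the bookkeeping at the boundary value $n_i=M_i-\la_i$: on the $\omega$-side this boundary survives the parity restriction precisely when $M_i-\la_i$ is even, while on the $c_p$-side the very same indicator emerges from the telescoping collapse of $\sum_{n=0}^{M_i-\la_i}(-1)^n$. Once this matching is verified along each coordinate, the identity $c_p=\omega_{\mal,\mmu}(p)$ follows.
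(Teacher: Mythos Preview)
Your proof is correct and follows essentially the same route as the paper: both compute $\omega_{\mal,\mmu}(p)$ via the inclusion--exclusion on $M_{\mnu}(p^n)$, reindex by $n_i=\nu_i+e_i-\la_i$, observe that the parity constraint $\nu_i\equiv\la_i\bmod 2$ pins down $e_i$ uniquely with sign $(-1)^{n_i}$, and then match the boundary term $n_i=M_i-\la_i$ against the telescoped value $\sum_{n=0}^{M_i-\la_i}(-1)^n$ on the $c_p$ side. The only cosmetic difference is that the paper carries out the matching in one direction (transforming $\omega_{\mal,\mmu}(p)$ directly into the defining sum for $c_p$), whereas you expand both sides separately and compare coefficients of $\rho_*(p^{m_1},\ldots,p^{m_4})^{-1}$.
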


\begin{proof}
Let $\al\in\Z_{\geq 0}$ and $A\in\Z$,  and recall the definition \eqref{eq:sum2} of
$S_\al(A;p^n)$. Combining \eqref{eq:s0-3} with \eqref{eq:al-0}, and arguing precisely as
in the proof of Lemma~\ref{lem:p=1}, we conclude that
\begin{align*}
N_{\mal,\mmu}(p^n)
=&p^{6n+\la_1+\cdots+\la_4}\Big(1+\frac{1}{p}\Big)^4\sum_{\colt{\nu_i\geq M_i}{2\mid
    \nu_i-\la_i}}
\sum_{\ma{e}\in \{0,1\}^4} \frac{(-1)^{e_1+\cdots +e_4}}{
\rho_*(p^{\nu_1+e_1}, \ldots, p^{\nu_4+e_4})}\\
&\quad +O(n^4p^{5n}).
\end{align*}
Making the change of variables $n_i=\nu_i+e_i-\lambda_i$, it follows that
\begin{align*}
\sigma_{\mal,\mmu}(p)
=&\Big(1+\frac{1}{p}\Big)^4\sum_{n_i\geq M_i-\lambda_i }
\rho_*(p^{\la_1+n_1}, \ldots, p^{\la_4+n_4})^{-1}\\
&\quad \times
\sum_{\colt{0\leq e_i\leq \min\{1,\la_i+n_i-M_i\}}{e_i\equiv
    n_i \bmod{2}}} (-1)^{e_1+\cdots +e_4}.
\end{align*}
This time we find that the summand can be expressed in terms of 
$$
\sum_{\colt{0\leq e\leq \min\{1,\la+n-M\}}{e\equiv
    n \bmod{2}}} 
\hspace{-0.2cm}
(-1)^{e}
=
\left\{
\begin{array}{ll}
(-1)^{n},& \mbox{if $\la+n-M\geq 1$},\\
1,& \mbox{if $\la+n-M=0$ and $2\mid M-\lambda$},\\
0,& \mbox{if $\la+n-M=0$ and $2\nmid M-\lambda$}.
\end{array}
\right.
$$
Since $\sum_{0\leq n\leq M-\lambda}(-1)^{n}$ is equal to $1$ if
$M-\lambda$ is even, and $0$ otherwise, we conclude that
\begin{align*}
\sigma_{\mal,\mmu}(p)
&=\Big(1+\frac{1}{p}\Big)^4\sum_{n_i\geq 0}
\frac{(-1)^{n_1+\cdots+n_4}}{\rho_*(p^{\max\{\mu_1,\la_1+n_1\}}, \ldots,
p^{\max\{\mu_4,\la_4+n_4\}})}.
\end{align*}
This completes the proof of the lemma.
\end{proof}

We now turn to the $2$-adic density, for which we claim that
\begin{equation}
  \label{eq:claim-deux}
\delta_{j,k}(\A)=\omega_{j,k,\bd}(2),
\end{equation}
where $\delta_{j,k}(\ma{A})$ is given by \eqref{eq:sig2} and 
$\omega_{j,k,\bd}(2)$ is given  by \eqref{eq:eq:def-dig2}.
On recalling the definition \eqref{eq:Ar} of $E_n$, 
it follows from \eqref{eq:s0-2} that
\begin{align*}
N_{j,k,\bd}(2^n)
=&2^{4n+4}
\#\left\{
\x\in \Z/2^n\Z : \begin{array}{l}
L_i(\x)\in d_iE_n\\
x_1 \equiv 1 \bmod{4}, ~x_{2}\equiv j \bmod{2}
\end{array}
\right\}.
\end{align*}
But then 
\begin{align*}
\omega_{j,k,\bd}(2)
&=
\lim_{n\to\infty} \frac{1}{2^{2n-4}} 
\#\left\{
\x\in \Z/2^n\Z : \begin{array}{l}
L_i(\x)\in d_iE_n\\
x_1 \equiv 1 \bmod{4}, ~x_{2}\equiv j \bmod{2}
\end{array}
\right\},
\end{align*}
which is just $\delta_{j,k}(\ma{A})$. This completes the proof of \eqref{eq:claim-deux}.

Finally we turn to the archimedean density $\omega_{\mcal{R}}(\infty)$
of points on the variety \eqref{eq:torsor} for which $\x\in\mcal{R}$.
We claim that
\begin{equation}
  \label{eq:claim-trois}
\omega_{\mcal{R}}(\infty)
=\pi^4 \meas(\mcal{R}).
\end{equation}
Our assumptions on $L_1,\ldots,L_4,\mcal{R}$ imply that
$L_i(\x)>0$ for all $\x\in\mcal{R}$.
To begin with, it is clear that
$$
\omega_{\mcal{R}}(\infty)=2^{8}\omega_{\mcal{R}}^+(\infty),
$$ 
where $\omega_{\mcal{R}}^+(\infty)$ is defined as for
$\omega_{\mcal{R}}(\infty)$, but with the additional constraint that $s_i,t_i>0$.
We will calculate $\omega_{\mcal{R}}^+(\infty)$
by parametrising the points via the $t_i$, using the Leray form.
In this setting the Leray form is given by 
$$
(2^4t_1t_2t_3t_4)^{-1}\d s_1\cdots \d s_4\d x_1\d x_2.
$$ 
On making the
substitution $t_i=\sqrt{d_i^{-1} L_i(\x)-s_i^2}$, and noting that
$$
\int_0^{\sqrt{A}} \frac{\d s}{\sqrt{A-s^2}}=\frac{\pi}{2},
$$
we therefore conclude that
\begin{align*}
\omega_{\mcal{R}}(\infty)
&=2^{4}
\int_{\x\in \mcal{R}} \Big(\prod_{1\leq i\leq
  4}\int_0^{\sqrt{d_i^{-1} L_i(\x)}}\frac{\d s}{\sqrt{d_i^{-1} L_i(\x)-s^2}}\Big)\d x_1\d x_2\\
&=\pi^4 \meas(\mcal{R}),
\end{align*}
as required for \eqref{eq:claim-trois}.

Bringing together \eqref{eq:claim-un}, \eqref{eq:claim-deux} and
\eqref{eq:claim-trois}, we easily deduce the statement of Theorem \ref{main3}.

\section{The $2$-adic densities}\label{2-adic} 

In this section we explicitly calculate the value of the $2$-adic
densities $\del_{j,k}(\A)=\del_{j,k}(\A,\ma{d})$
in \eqref{eq:sig2}. In effect this will simplify the process of
deducing Theorem \ref{main2}. Let $L_1,\ldots,L_4\in \Z[x_1,x_2]$ be
arbitrary linear forms that satisfy any of the normalisation
conditions from the introduction, with $L_3, L_4$ given by
\eqref{L3L4}. In particular, it is clear that 
there exist integers $k_3,k_4\geq 0$ such that
\begin{equation}
   \label{eq:L34}
2^{-k_3}L_3(\x)=2^{\mu_3}a_3'x_1+2^{\nu_3}b_3'x_2, \quad 
2^{-k_4}L_4(\x)=2^{\mu_4}a_4'x_1+2^{\nu_4}b_4'x_2, 
\end{equation}
for integers $a_i',b_i'$ such that
\begin{equation}
   \label{eq:aibi} 
a_3'a_4'b_3'b_4'(a_3'b_4'-a_4'b_3')\neq 0, \quad 2\nmid a_3'a_4'b_3'b_4',
\end{equation}
and integers $\mu_i,\nu_i\geq 0$ such that
\begin{equation}
   \label{eq:munu}
\mu_3\nu_3=\mu_4\nu_4=0.
\end{equation}
We are now ready to proceed with the calculation
of $\delta_{j,k}(\A)$, whose value will depend intimately on $j,k$, $\ma{d}$
 and the values of the coefficients in \eqref{eq:L34}.
The calculations in this section are routine and so we will be
brief. In fact we will meet these calculations again in \S \ref{s:t2}
under a slightly different guise.

Recall the definition \eqref{eq:Ar} of $E_n$ for any $n \in \N$, and
the definition \eqref{eq:sig2} of $\delta_{j,k}(\A)$,
for $L_1,\ldots,L_4,\mcal{R}$ satisfying \textsf{NH}$_k(\ma{d})$. 
When $k=0$, it easily follows from our normalisation conditions that
$L_i(\x) \in d_i E_n$  
for any integer vector $\x$ such that $x_1 \equiv 1 \bmod{4}$. Hence
\begin{equation}
   \label{eq:C}
\begin{split}
\delta_{j,0}(\A) &=
\delta_j,
\end{split}
\end{equation}
in the notation of \eqref{eq:dj}. 

Let us now suppose that $j=k=1$. Then clearly 
\begin{equation}\label{eq:sig2'}
\delta_{1,1}(\A)=\lim_{n\to\infty} \frac{1}{2^{2n-4}} \#\Big\{
\x\in(\Z/2^n\Z)^2 :
\begin{array}{l}
x_1\equiv 1 \bmod 4, ~2\nmid x_2 \\  
d_3L_3(\x), d_4L_4(\x) \in E_n
\end{array}\Big\}.
\end{equation}
It follows from \eqref{eq:munu} that at most two of
$\mu_3,\mu_4,\nu_3,\nu_4$ can be non-zero.  An easy calculation shows that
\begin{equation}
   \label{delta1}
  \delta_{1,1}(\A)=\left\{
\begin{array}{ll}
1, & \mbox{if $b_3'd_3-2^{\mu_3}\equiv b_4'd_4-2^{\mu_4} \bmod 4 $,}\\ 
0 , & \mbox{otherwise},
\end{array}
\right.
\end{equation}
when $\nu_3=\nu_4=0$ and $\mu_3,\mu_4\geq 1$.
Similarly, we deduce that
$$
  \delta_{1,1}(\A)=\left\{
\begin{array}{ll}
2, & \mbox{if $ a_j'  \equiv d_j-2^{\nu_j} \bmod 4$ for $j=3,4$},\\
0 , & \mbox{otherwise},
\end{array}
\right.
$$
when $\mu_3=\mu_4=0$ and $\nu_3,\nu_4\geq 1$.
Let $j_1,j_2$ denote distinct elements from the set $\{3,4\}$. Then it
follows from \eqref{eq:sig2'} that
\begin{equation}
   \label{delta3}
  \delta_{1,1}(\A)=\left\{
\begin{array}{ll}
1, & \mbox{if $ a_{j_1}'  \equiv d_{j_1}-2^{\nu_{j_1}} \bmod 4 $},\\
0 , & \mbox{otherwise},
\end{array}
\right.
\end{equation}
when $\mu_{j_1}=\nu_{j_2}=0$ and $\mu_{j_2}, \nu_{j_1}\geq 1$.
Still with the notation $\{j_1,j_2\}=\{3,4\}$, a simple calculation
reveals that
\begin{equation}
   \label{delta4}
\delta_{1,1}(\A)=\left\{
\begin{array}{ll}
1, & \mbox{if $a_{j_2}'\equiv d_{j_2}-2^{\nu_{j_2}}\ \bmod 4$},\\  
0, & \mbox{otherwise},
\end{array}
\right.
\end{equation}
when $\mu_3=\mu_4=\nu_{j_1}=0$ and $\nu_{j_2}\geq 1$.
In performing this calculation it is necessary to calculate the
contribution to the right hand side of \eqref{eq:sig2'} for fixed
values of $n$ and fixed $2$-adic valuation $\xi$ of 
$a_3'x_1+b_3'x_2$, before then summing over all possible values of $\xi\geq
1$. In a similar fashion, one finds 
\begin{equation}
   \label{delta5}
\delta_{1,1}(\A)=1/2,
\end{equation}
when $\nu_3=\nu_4=\mu_{j_1}=0$ and $\mu_{j_2}\geq 1$.
It remains to handle the case in which all the $\mu_j,\nu_j$ are
zero. For this we set 
\begin{equation}
  \label{eq:v}
v:=\nu_2(a'_3b'_4-a'_4b'_3),
\end{equation}
which must be a positive integer, since $a_j',b_j'$ are all odd.
Thus we have 
\begin{equation}
   \label{delta6}
\delta_{1,1}(\A)=\left\{
\begin{array}{ll}
1/2, & \mbox{if  $v=1$},\\
1-3/2^{v}, & \mbox{if  $v\geq 2$ and $b_3'd_3 \equiv b_4'd_4 \bmod 4$},\\ 
3/2^{v}, & \mbox{if  $v\geq 2$ and $b_3'd_3 \equiv -b_4'd_4 \bmod 4$}, 
\end{array}
\right.
\end{equation}
when $\mu_3=\mu_4=\nu_3=\nu_4=0$.

When $j\neq 1$, and $k\neq 0$, we will find it convenient to
phrase our formulae for $\del_{j,k}(\A)$ in terms of $\del_{1,k}(\A)$. 
We claim that
\begin{equation}
  \label{eq:1811.1}
\delta_{0,k}(\A)=\sum_{\xi=1}^\infty \frac{ \delta_{1,k}(\A\M_\xi) }{2^{\xi}},\quad
\delta_{*,k}(\A)=\sum_{\xi=0}^\infty \frac{ \delta_{1,k}(\A\M_\xi)
}{2^{\xi}}
\end{equation}
when $k=1$ or $2$, where 
\begin{equation}\label{defMxi}
\M_\xi:=\Big(
\begin{array}{cc}
1&0\\
0 &2^\xi
\end{array}
\Big).
\end{equation}
Here the formula for $\del_{0,k}(\A)$ is not hard to establish, and follows on
extracting the $2$-adic valuation of $x_2$ in \eqref{eq:sig2}. The
formula for $\del_{*,k}(\A)$ follows on noting that $\del_{*,k}(\A)=
\del_{0,k}(\A)+\del_{1,k}(\A)$.  Finally, we express $\del_{1,2}(\A)$
in terms of $\del_{*,1}(\A)$ via the transformation
\begin{equation}  
  \label{defM}
\M_{c,d_2}:=\Big(
\begin{array}{cc}
1&0\\
\kappa+4c &4
\end{array}
\Big),
\end{equation}
where $\kappa=\pm 1$ denotes the residue modulo $4$ of $d_2$, and  
$c\in \{0,1,2\}$ is any parameter we care to choose.
It is not hard to see that 
\begin{equation}
  \label{eq:1811.2}
\delta_{1,2}(\A)=\frac{\del_{*,1}(\A\M_{c,d_2})}{4},
\end{equation}
using the fact that $x_1\equiv  1 \bmod{4}$ and $ x_2 \equiv d_2 \bmod{4}$.

\section{Proof of Theorem \ref{main0}}

Our proof follows that given by Heath-Brown for \cite[Theorem
 1]{h-b03}, but with extra care taken to keep track of the error 
 term's dependence on $L_1, \ldots, L_4$ and $\RR$.
Our improvement in the exponent of $\log X$ will emerge through a 
modification of the the final stages of the argument.

Let $X\RR_4:=\{\x\in\Z^2\cap X\RR: x_1\equiv 1 \bmod 4\}$, and for
given $\ma{d}\in \N^4$ let $\RR(\ma{d})\subseteq \RR$ denote a convex
region depending on $\ma{d}$. We write $X\RR_4(\ma{d})$ for the set 
$ \{\x\in\Z^2\cap X\RR(\ma{d}): x_1\equiv 1 \bmod 4\}$. The first step of the argument involves
modifying the ``level of distribution'' result that is employed by Heath-Brown \cite[Lemma 2.1]{h-b03}.

\begin{lem}\label{lem21} 
Let $X\geq1$ and  $Q_1,Q_2,Q_3,Q_4\ge 2$. Write
$Q=\max_i Q_{i}$ and $V=Q_1 Q_2 Q_3 Q_4.$
 Then there is an absolute constant $A>0$ such that
\begin{align*}
 \sum_{\tolt{\ma{d}\in\N^4}{d_i\le Q_i}{2\nmid d_i}} 
\left|\#\big(\sfg_{\ma{d}}\cap X\RR_4(\ma{d}) \big)
 -\frac{\meas(\RR(\ma{d}))X^2}{4\det\sfg_{\ma{d}}}\right|\\
  \ll  L_\infty^{\varepsilon}r_\infty X( V^{1/2}(\log Q)^{A}+ Q )+V.
\end{align*}
\end{lem}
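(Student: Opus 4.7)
The plan is to combine a classical lattice-point counting estimate on each fixed sublattice $\sfg_{\ma{d}}$ with a Cauchy--Schwarz averaging argument over $\ma{d}$, powered by the Nair-type divisor moment bounds of \cite{nair}.

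First I would invoke the standard convex-set estimate: for any rank-$2$ integer sublattice $\sfg\subseteq\Z^2$ with first successive minimum $\lambda(\sfg)$, and any bounded convex region $\mcal{S}\subseteq\R^2$ with piecewise continuously differentiable boundary,
$$
\#(\sfg\cap\mcal{S})=\frac{\meas(\mcal{S})}{\det\sfg}+O\Big(1+\frac{\mathrm{diam}(\mcal{S})}{\lambda(\sfg)}\Big).
$$
Since each $d_i$ is odd, the residue condition $x_1\equiv 1\bmod 4$ cuts out a sublattice of index $4$ in $\sfg_{\ma{d}}$, so the above estimate specialises to
$$
\Big|\#\bigl(\sfg_{\ma{d}}\cap X\RR_4(\ma{d})\bigr)-\frac{\meas(\RR(\ma{d}))X^2}{4\det\sfg_{\ma{d}}}\Big|\ll 1+\frac{Xr_\infty}{\lambda(\sfg_{\ma{d}})},
$$
uniformly in $\ma{d}$. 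Summing the $O(1)$ contribution over $\ma{d}$ with $d_i\le Q_i$ accounts for the final $V$ term on the right-hand side.

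The remaining task is to estimate $\sum_{\ma{d}}1/\lambda(\sfg_{\ma{d}})$, which I would attack by rearranging over shortest vectors. For each $\ma{d}$, pick a nonzero shortest vector $\x_{\ma{d}}$ of $\sfg_{\ma{d}}$; by Minkowski, $\|\x_{\ma{d}}\|\ll(\det\sfg_{\ma{d}})^{1/2}\ll V^{1/2}$. Since $\x\in\sfg_{\ma{d}}$ is precisely the condition $d_i\mid L_i(\x)$ for each $i$, the number of tuples $\ma{d}$ with $d_i\le Q_i$ and $\x\in\sfg_{\ma{d}}$ is at most $\prod_{i=1}^4\tau(|L_i(\x)|)$, and $|L_i(\x)|\le 2L_\infty\|\x\|$ by the definition \eqref{eq:Linf}. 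An exchange of summation followed by Cauchy--Schwarz then gives
$$
\sum_{\ma{d}}\frac{1}{\lambda(\sfg_{\ma{d}})}\ll\Big(\sum_{0<\|\x\|\ll V^{1/2}}\frac{1}{\|\x\|^2}\Big)^{1/2}\Big(\sum_{\|\x\|\ll V^{1/2}}\prod_{i=1}^4\tau(L_i(\x))^2\Big)^{1/2}.
$$
The first factor is $O((\log V)^{1/2})$ by dyadic decomposition, while the second is $O(L_\infty^{\varepsilon}V(\log V)^{B})$ via the Nair-type moment estimates of \cite{nair}. This yields $\sum_{\ma{d}}1/\lambda(\sfg_{\ma{d}})\ll L_\infty^{\varepsilon}V^{1/2}(\log Q)^{A}$, which, multiplied by $Xr_\infty$, is exactly the first main term of the claimed bound.

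The additional contribution $L_\infty^{\varepsilon}r_\infty XQ$ reflects degenerate tuples $\ma{d}$ for which the Minkowski bound $\|\x_{\ma{d}}\|\ll V^{1/2}$ is wasteful --- for instance, when some $d_i$ is close to $1$, so that one divisibility condition is nearly trivial and $\x_{\ma{d}}$ lies essentially in the kernel of a single $L_j$. These sparse families I would handle separately by a direct dyadic estimate in $\lambda(\sfg_{\ma{d}})$; the linear-in-$Q$ dependence emerges because the residual sum is effectively one-dimensional. The principal technical obstacle is securing full uniformity in the coefficients of $L_1,\ldots,L_4$ in the moment bound $\sum\prod_i\tau(L_i(\x))^2$, as any weaker dependence than $L_\infty^{\varepsilon}$ would destroy the claimed estimate; it is precisely this uniformity that the results of \cite{nair} are designed to supply.
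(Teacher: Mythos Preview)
Your approach is essentially the same as the paper's: Daniel's lattice-point estimate \cite[Lemma~3.2]{daniel} supplies the bound $r_\infty X/|\ma{v}|+1$ for some primitive $\ma{v}\in\sfg_{\ma{d}}$ with $|\ma{v}|\ll V^{1/2}$; the $O(1)$ contributes $O(V)$; and the remaining sum $\sum_{\ma{d}}|\ma{v}|^{-1}$ is handled by swapping the order of summation and invoking the divisor-sum bounds of \cite{nair}. The paper, however, does not use Cauchy--Schwarz for the non-degenerate part: it simply bounds
\[
\sum_{\colt{|\ma{v}|\ll V^{1/2}}{L_i(\ma{v})\neq 0}}\frac{\tau(F(\ma{v}))}{|\ma{v}|}
\]
directly via dyadic summation and \cite[Corollary~1]{nair}, which is cleaner and requires only first moments of $\tau$.

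Your explanation of the $Q$-term is where the proposal goes genuinely astray. The contribution $L_\infty^\varepsilon r_\infty X Q$ does not come from tuples where ``some $d_i$ is close to $1$'' or where the Minkowski bound is wasteful. It comes from the exact vanishing $L_{i_0}(\ma{v})=0$ for some index $i_0$: in that case your divisor bound $\prod_i\tau(|L_i(\ma{v})|)$ is meaningless, since $d_{i_0}\mid L_{i_0}(\ma{v})=0$ imposes no constraint and $d_{i_0}$ ranges freely up to $Q_{i_0}\leq Q$. The paper isolates this as a separate contribution $\sigma_2$: primitivity of $\ma{v}$ forces $v_1\mid b_{i_0}$ and $v_2\mid a_{i_0}$, so there are $O(L_\infty^\varepsilon)$ admissible $\ma{v}$; for $j\neq i_0$ one has $0<|L_j(\ma{v})|\leq 2L_\infty^2$, so the number of $d_j$ is $O(L_\infty^\varepsilon)$; and the free $d_{i_0}$ gives the factor $Q$. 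You should replace your vague ``direct dyadic estimate'' by this argument.
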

 
\begin{proof}
We appeal to work of Daniel
\cite[Lemma 3.2]{daniel}. This gives
\begin{equation}\label{eqdaniel}
\left|\#\big(\sfg_{\ma{d}}\cap X\RR_4(\ma{d}) \big)
 -\frac{\meas(\RR(\ma{d}))X^2}{4\det\sfg_{\ma{d}}}\right|\
  \ll r_\infty\frac{X}{|\ma{v}|}+1,
\end{equation}
for some vector $\ma{v} \in \sfg_{\ma{d}}$ with coprime coordinates,
such that
$$
|\ma{v}|\ll (\det \sfg_{\ma{d}})^{1/2}\leq (d_1d_2d_3d_4)^{1/2}\leq
V^{1/2}.
$$ 
The contribution from the second term in \eqref{eqdaniel} is clearly
$O(V)$. To complete the proof of the lemma it will suffice to show
that
\begin{equation}
  \label{eq:2211.1}
\sum_{\colt{\ma{d}\in\N^4}{d_i\le Q_i}}\frac{1}{|\ma{v}|}\ll L_\infty^{\varepsilon}   (
V^{1/2}(\log Q)^{A}+ Q ),
\end{equation}
for some absolute constant $A>0$.

Let  $\sigma_1$ denote the contribution from the case in which
$L_1(\ma{v})\cdots L_4(\ma{v})\neq 0$, and let $\sigma_2$ denote the
remaining contribution.  We then have
$$
\sigma_1\leq \sum_{\colt{|\ma{v}|\ll
V^{1/2}}{L_i(\v)\neq 0}}\frac{1}{|\ma{v}|}\sum_{\tolt{\ma{d}\in\N^4}{d_i\le Q_i}{d_i\mid L_i(
\ma{v})}}1 \ll
L_\infty^{\varepsilon}\tau(F(\ma{v})),
$$ 
where $\tau$ is the divisor function and $F$ is a primitive
binary form that is proportional to $L_1\cdots L_4.$ 
A simple application of \cite[Corollary 1]{nair} now reveals that
there exists a constant $A>0$ such that
$$ 
\sum_{|\ma{v}|\leq x }\tau(F(\ma{v}))\ll L_\infty^\varepsilon x^2(\log
x)^A.
$$
We therefore obtain the estimate $
\sigma_1\ll L_\infty^{\varepsilon} V^{1/2}(\log Q)^{A},
$
on carrying out a dyadic summation for the range of $\ma{v}$, which is
satisfactory for \eqref{eq:2211.1}.

Turning to a bound for $\sigma_2$, we suppose that $i_0\in
\{1,2,3,4\}$ is an index for which
$L_{i_0}(\ma{v})=a_{i_0}v_1+b_{i_0}v_2=0$. Since
$\hcf(v_1,v_2)=1$, we have $v_1\mid b_{i_0}$ and $v_2\mid a_{i_0}$. If
$j\neq {i_0}$, then $L_j(\ma{v})\neq 0$ because $L_{i_0}$ and  $L_j$
aren't proportional. Moreover, we have $|L_j(\ma{v})|\leq 2
L_\infty^2$ and the number of possible values of $L_j(\ma{v})$ is
bounded by $O( L_\infty^\varepsilon)$. Since  $d_j\mid L_j(\ma{v})$,
the number of available $d_j$ is $O( L_\infty^{\varepsilon})$, whereas
the number of $d_{i_0}$ is bounded by $Q_{i_0}\leq Q$. Thus it follows
that 
$
\sigma_2\ll L_\infty^{\varepsilon} Q,
$ 
which therefore completes the proof of \eqref{eq:2211.1}.
\end{proof}

Recall the definition \eqref{eq:r'} of $r'=r'(L_1,\ldots,L_4,\RR)$. It
will be convenient to set
$$
X':=r'X
$$
in what follows, and to assume that 
$r'X^{1-\ve}\geq 1$. In particular this ensures that $\log X' \gg \log
X$.

Our next task is to establish a uniform version of
\cite[Lemma 3.1]{h-b03}. The reader is recommended to consult \cite{h-b03} for full
details of the ensuing argument, since we will only stress those parts 
where modification is needed.
When $0<m\leq X'$ and $m\equiv 1\bmod{4}$, we may write
\begin{align*}
r(m)=4\sum_{\colt{d\mid m}{d\leq {X'}^{1/2}}}\chi (d)+
4\sum_{\colt{e\mid m}{m>e {X'}^{1/2}}}\chi (e)=4A_+(m)+4A_-(m),
\end{align*}
say,
as in \cite{h-b03}.
This will be employed with $m=L_i(\ma{x})$ for $1\leq i\leq 3$. The conditions
$L_i(\ma{x})\equiv v_1 \bmod{4}$ and $v_1\equiv 1 \bmod{4}$ yield 
$m\equiv 1 \bmod{4}$. In a similar fashion, we may write 
$$
r(m)  =4B_+(m)+4C(m)+4B_-(m),
$$
under the same hypotheses on $m$, with 
$$
B_+(m):=\sum_{\colt{d\mid m}{d\leq Y}}\chi (d),\quad C(m):=
\sum_{\colt{d\mid
m}{Y<d\leq
 X'/Y}}\!\!\!\!\!\chi (d),\quad
B_-(m):=\sum_{\colt{e\mid m}{m>e  X'/Y}}\!\!\!\!\chi (e).
$$
Here $1\leq Y\leq {X'}^{1/2}$ is a parameter to be chosen in due 
course.  This 
formula will be used with $m=L_4(\ma{x})$. The variable $e$ in $A_-(L_i(\x))$ and $B_-(L_4(\x))$
will satisfy $e\leq {X'}^{1/2}$ and $e\leq Y$, respectively. 

On writing 
$$
S_{\pm,\pm,\pm,\pm}
:=\sum_{\ma{x}\in
X\RR_4}A_\pm(L_1(\ma{x}))A_\pm(L_2(\ma{x}))A_\pm(L_3(\ma{x}))
B_\pm(L_4(\ma{x})),
$$
we obtain
$$S_*(X)=4S_0+4^4\sum S_{\pm,\pm,\pm,\pm},$$
which is the analogue of \cite[Eq. (3.4)]{h-b03}.
Let us consider the sum $S_{+,+,-,-}$, the other $15$ sums being handled 
similarly. Write $Q_1=Q_2=Q_3={X'}^{1/2}$ and $Q_4=Y$. Then 
$$
S_{+,+,-,-}= \sum_{\colt{\ma{d}\in\N^4}{d_i\le Q_i}}\chi
(d_1d_2d_3d_4)\#\bigl(\sfg_{\ma{d}}\cap X\RR_4(\ma{d})
\bigr),
$$
where $\RR(\ma{d}):=\{ \x\in \RR: L_3(\x)>d_3{X'}^{1/2},
~L_4(\x)>d_4X'/Y\}$.
An application of Lemma \ref{lem21} therefore implies that
\begin{equation}
  \label{eq:2311.1}
S_{+,+,-,-}= \sum_{\colt{\ma{d}\in\N^4}{d_i\le Q_i}}\chi
(d_1d_2d_3d_4)  
\frac{ \meas(\RR(\ma{d}))X^2}{4\det\sfg_{\ma{d}}} +O(T),
\end{equation}
with 
$$
T:=L_\infty^\varepsilon r_\infty  X 
{X'}^{3/4}Y^{1/2}(\log X')^{A} + {X'}^{3/2}Y,
$$
and $A\geq 2$. 
Choosing $Y={X'}^{1/2}/(\log X')^{2A+2}$, we obtain 
$$
T\ll  \frac{L_\infty^\varepsilon r_\infty r'X^2}{\log X'}+ 
\frac{{r'}^2X^2}{(\log X')^{2A+2}}.
$$
We claim that it is possible to take
\begin{equation}
  \label{eq:2311.2}
T\ll  \frac{L_\infty^\varepsilon r_\infty r'X^2}{\log X}
\end{equation}
in \eqref{eq:2311.1}. When $r'\leq r_\infty(\log X')^{2A+1}$ this is
trivial, since the assumption $r'X^{1-\ve}\geq 1$ yields
$\log X' \gg \log X$.
Suppose now that $r'>r_\infty(\log X')^{2A+1}\gg r_\infty (\log 
X)^{2A+1}$. 
Then on returning to the original
definition of $S_{\pm,\pm,\pm,\pm}$, it follows from
an easy application of \cite[Corollary 1]{nair} that
\begin{align*}
S_{+,+,-,-}
\ll \sum_{\ma{x}\in
X\RR_4}\tau\big(L_1(\ma{x})L_2(\ma{x})L_3(\ma{x})L_4(\ma{x})\big)
&\ll L_\infty^\ve r_\infty^2 X^2 (\log X)^4\\
&\ll L_\infty^\ve r_\infty r' X^2 (\log X)^{3-2A}. 
\end{align*}
Thus we may certainly take \eqref{eq:2311.2} in 
\eqref{eq:2311.1} in this case too. 
 
Although we will omit the details here, it is easy to modify the
argument of  \cite{h-b03} to deduce that the main term in
\eqref{eq:2311.1} is
$$
\frac{\pi^4 \meas(\mcal{R})X^2}{4^5} \prod_{p>2}\sigma_p^*
+O\big(L_\infty^\ve r_\infty r' X^{79/40+\ve}\big),
$$
and similarly for all the $S_{\pm,\pm,\pm,\pm}$.
Bringing all of this together we have therefore established the
following result.

\begin{lem}\label{lem31}
Assume that $r'X^{1-\ve}\geq 1$. Then we have 
$$
S_*(X)=4\pi^4 \meas(\mcal{R})X^2 \prod_{p>2}\sigma_p^*+4S_0+
O\Big(\frac{L_\infty^\varepsilon r_\infty r'X^2}{\log X}\Big),
$$
where
$$
S_0:=\sum_{\ma{x}\in
X\RR_4}r(L_1(\ma{x}))r(L_2(\ma{x}))r(L_3(\ma{x}))
C(L_4(\ma{x})).
$$
\end{lem}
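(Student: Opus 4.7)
The plan is to assemble the estimates already derived in the excerpt. Starting from the decomposition
\[
S_*(X)=4S_0+4^4\sum S_{\pm,\pm,\pm,\pm},
\]
where the inner sum runs over all $16$ choices of signs, it suffices to process each of the $16$ sums $S_{\pm,\pm,\pm,\pm}$ in exactly the same way as $S_{+,+,-,-}$ was processed, and then amalgamate main terms and errors.

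For each sign pattern, applying Lemma \ref{lem21} with $Q_1=Q_2=Q_3={X'}^{1/2}$ and $Q_4=Y={X'}^{1/2}/(\log X')^{2A+2}$ yields an expression analogous to \eqref{eq:2311.1}, in which the convex region $\RR(\ma{d})\subseteq\RR$ encodes the upper or lower cut-offs on each $L_i(\x)$ dictated by the $A_\pm$ and $B_\pm$ factors. The resulting error is dominated by the bound \eqref{eq:2311.2} for every sign pattern, since the dichotomy argument via \cite[Corollary 1]{nair} depends only on the size of $r'$ relative to $r_\infty(\log X')^{2A+1}$, and not on which sign pattern is being considered.

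For the main term, one evaluates $\sum_{\ma{d}}\chi(d_1\cdots d_4)\meas(\RR(\ma{d}))/\det\sfg_{\ma{d}}$ by exchanging the order of summation, reinterpreting the restrictions on $d_i$ coming from $\RR(\ma{d})$ as divisor-type constraints for each fixed $\x$, and then evaluating the resulting character-weighted divisor sums. This is the same analysis as in \cite{h-b03}, now carried out with explicit control of the dependence on $L_1,\ldots,L_4,\RR$; the Euler product machinery, combined with the identities \eqref{eq:sig*} and \eqref{eq:rho*}, produces the common value
\[
\frac{\pi^4\meas(\RR)X^2}{4^5}\prod_{p>2}\sigma_p^*+O\bigl(L_\infty^\ve r_\infty r'X^{79/40+\ve}\bigr)
\]
for each of the $16$ sign patterns. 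Multiplying by $4^4$ and summing, the main terms collapse into $4\pi^4\meas(\RR)X^2\prod_{p>2}\sigma_p^*$, while the $16$ individual errors are absorbed into a single $O(L_\infty^\ve r_\infty r'X^2/\log X)$, since $X^{79/40+\ve}\ll X^2/\log X$. Adding back the untouched term $4S_0$ then gives the stated identity.

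The chief obstacle is the uniform evaluation of the main term: one must track the dependence on $L_\infty$ and $r'$ through the truncation parameters and the volume comparison on the truncated regions $\RR(\ma{d})$, and verify that the $16$ character-weighted volumes each produce the same constant $\tfrac{1}{4^5}\pi^4\meas(\RR)\prod_{p>2}\sigma_p^*$, independent of the sign pattern. This amounts to a careful quantitative reworking of \cite{h-b03}; the $X^{79/40+\ve}$ saving produced there is more than enough to be subsumed in the final $1/\log X$ bound, so the quality of the error estimate is ultimately dictated by the choice of $Y$ and by the divisor-function input from \cite{nair}.
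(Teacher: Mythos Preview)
Your proposal is correct and follows essentially the same approach as the paper: the decomposition $S_*(X)=4S_0+4^4\sum S_{\pm,\pm,\pm,\pm}$, the application of Lemma~\ref{lem21} with the same choice of $Q_i$ and $Y={X'}^{1/2}/(\log X')^{2A+2}$, the dichotomy argument via \cite[Corollary~1]{nair} to obtain \eqref{eq:2311.2}, and the evaluation of each main term as $\tfrac{1}{4^5}\pi^4\meas(\RR)X^2\prod_{p>2}\sigma_p^*+O(L_\infty^\ve r_\infty r' X^{79/40+\ve})$ by a uniform reworking of \cite{h-b03}. The paper likewise omits the details of this last step, so your level of detail matches theirs.
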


To conclude our treatment of $S_*(X)$ we must estimate 
$S_0$.  Let
$$
\mcal{B}:=\{ m\in\Z: \exists d\mid m, Y<d\leq X'/Y\}\cap \{m\in \Z:\exists
\ma{x}\in X\RR_4, L_4(\ma{x})=m\}.
$$
Then as in \cite{h-b03}, we write
\begin{equation}
  \label{eq:2311.3}
S_0\ll \sum_{m\in  \mcal{B}}  S_0(m)|C(m)|,
\end{equation}
where
$$
S_0(m):=\sum_{\ma{x}\in
  \mcal{A}(m)}r(L_1(\ma{x}))r(L_2(\ma{x}))r(L_3(\ma{x}))
$$
and $\mcal{A}(m):=\{ \ma{x}\in X\RR_4: L_4(\ma{x})=m\}. $ We proceed
to establish the following estimate

\begin{lem}\label{l:2311.4}
There exists an absolute constant $c_0>0$ such that
$$
S_0(m)\ll L_\infty^\varepsilon  r_\infty X(\log\log X')^{c_0}.
$$
\end{lem}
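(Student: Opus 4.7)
The plan is to reduce $S_0(m)$ to a univariate sum by parametrising the integer points on the affine line $L_4(\x)=m$, and then to apply a uniform Nair-type estimate for sums of non-negative submultiplicative functions along linear forms. If $\mcal{A}(m)$ is empty the conclusion is trivial, so fix a particular solution $\ma{x}_0\in\Z^2\cap X\RR$ of $L_4(\x)=m$. Choose a primitive integer vector $\ma{w}$ in the kernel of $L_4$; since $L_4$ has integer coefficients of size at most $L_\infty$, Minkowski yields $|\ma{w}|\ll L_\infty$. Every integer solution of $L_4(\x)=m$ is then of the shape $\x=\ma{x}_0+t\ma{w}$ with $t\in\Z$, and the constraints $\x\in X\RR$ and $x_1\equiv 1\bmod 4$ confine $t$ to an interval $I\subseteq\Z$ of length $|I|\ll r_\infty X$ intersected with a single residue class modulo $4$. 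For each $i\in\{1,2,3\}$ one obtains $L_i(\ma{x}_0+t\ma{w})=a_it+b_i=:\ell_i(t)$, with integer coefficients of size $\ll L_\infty^2+r'X$.

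Since no two of $L_1,L_2,L_3,L_4$ are proportional, no two of $\ell_1,\ell_2,\ell_3$ are either, so the cubic polynomial $F(t):=\ell_1(t)\ell_2(t)\ell_3(t)$ is separable. With this substitution
\[
S_0(m)\;=\;\sum_{\substack{t\in I\\ t\equiv t_0\bmod 4}}r(\ell_1(t))r(\ell_2(t))r(\ell_3(t)),
\]
where the integrand is a non-negative submultiplicative function of the tuple $(\ell_1(t),\ell_2(t),\ell_3(t))$. I would now invoke the uniform Nair-type estimate of \cite[Corollary 1]{nair} (with a mild adjustment to accommodate the fixed congruence class modulo $4$), producing a bound of the form
\[
S_0(m)\;\ll\;L_\infty^{\ve}\,|I|\,\exp\!\Big(\sum_{p\le |I|}\frac{g(p)}{p}\Big),
\]
where $g(p)$ records the local excess of $r(\ell_1)r(\ell_2)r(\ell_3)/4^3$ over its expected value.

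The decisive arithmetic input is the identity $r(n)/4=\sum_{d\mid n}\chi(d)$, which gives $g(p)=3\chi(p)+O(1/p)$ at every prime $p$ not dividing $2\Delta(F)$. As the partial sums $\sum_{p\le T}\chi(p)/p$ are uniformly bounded, these primes contribute $O(1)$ to the exponent. Only the $O(\log L_\infty+\log X')$ ``ramified'' primes dividing $2\Delta(F)$ can contribute substantially, and Mertens' theorem shows that their total contribution is $O(\log\log X')$. This collapses $\exp\bigl(\sum_{p\le|I|}g(p)/p\bigr)$ to $(\log\log X')^{c_0}$ for some absolute $c_0>0$, and inserting $|I|\ll r_\infty X$ then delivers the claimed bound.

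The main obstacle is precisely this last step: a direct appeal to Shiu's theorem for the single multiplicative function $n\mapsto r(n)$ would yield only a $(\log X')^{c_0}$ factor, and the extra saving to $(\log\log X')^{c_0}$ must be extracted by cashing in the cancellation inherent in the twisted representation $r=4(\mathbf{1}\ast\chi)$. A secondary technical point is maintaining complete uniformity in $m$ and in $L_1,\ldots,L_4$; this forces one to track the coefficient dependence through the Nair-type bound carefully, which is exactly the feature that \cite{nair} provides.
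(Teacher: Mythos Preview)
Your strategy is the same as the paper's: parametrise the line $L_4(\x)=m$ by a single integer variable, dominate $r(\ell_1)r(\ell_2)r(\ell_3)$ by a multiplicative function of the cubic $F$, and invoke a uniform Nair-type bound from \cite{nair}. The paper takes $n=x_2$ as the free variable, explicitly constructs a modified function $r_1$ (equal to $r$ away from primes dividing an $m$-\emph{independent} integer $H\ll L_\infty^{O(1)}$, and equal to $\tau$ at those primes) so that $\prod_i r(\ell_i)\ll L_\infty^{\varepsilon} r_1(G_m(n))$ for a primitive cubic $G_m$, and then quotes \emph{Theorem~2} of \cite{nair} as a black box to produce the $(\log\log X')^{c_0}$ factor directly.

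There is a genuine slip in your endgame. You say the ramified primes contribute $O(\log\log X')$ to the exponent and then claim this ``collapses $\exp(\cdot)$ to $(\log\log X')^{c_0}$''; but $\exp(O(\log\log X'))=(\log X')^{O(1)}$, which is too weak. The repair is either to note that $\sum_{p\mid 2\Delta(F)}1/p\ll\log\log\log X'$ (since $|\Delta(F)|\ll {X'}^{O(1)}$ and the maximum of $\sum_{p\mid n}1/p$ over $n\le N$ is attained at primorials and is $\sim\log\log\log N$), or, as the paper does, to arrange the bad primes to divide an $m$-independent quantity of size $L_\infty^{O(1)}$ so that their effect is absorbed into $L_\infty^{\varepsilon}$. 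Two further points: the relevant input from \cite{nair} is Theorem~2, not Corollary~1 (the latter only gives $(\log X)^A$); and you should say how to pass from $\prod_i r(\ell_i(t))$ to a single multiplicative function of $F(t)$, since $r(p^a)r(p^b)$ is not in general bounded by $r(p^{a+b})$ --- this is exactly what the paper's $r_1$ construction handles.
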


\begin{proof}
We begin by recalling the notation used in \cite{h-b03}, with only
very minor modifications. Suppose that  $L_i(\x)=a_ix_1+b_ix_2$ with
$a_i\equiv 1 \bmod{4}$ and $b_i\equiv 0 \bmod{4}$. Then we have 
$x_1=(m-b_4x_2)/a_4$ and
$$
L_i(\x)=\frac{A_im+B_in}{a_4}=L_i'(m,n),
$$ 
with $A_i=a_i,$ $n=x_2$ and $B_i=a_4b_i-a_ib_4$. Its crucial to
observe that $B_1B_2B_3\neq 0$ since none of  $L_1,L_2,L_3$ are
proportional to $L_4$. We will use the inequality 
$r(L_i'(m,n))\leq r(a_4(A_im+B_in))$. Note that
$$
a_4(A_im+B_in)=a_4\hcf(A_im,B_i)(A_i'(m)+B_i'n)
$$ 
with $B_i':=B_i /\hcf(A_im,B_i)$ and $A'_i(m)=A_im/\hcf(A_im,B_i)$. In
particular these coefficients are coprime.
Write
$$
H= a_4^3B_1B_2B_3 \prod_{1\leq i\neq j\leq 3}|a_ib_j-a_jb_i|,
$$
and introduce the multiplicative function $r_1$, given by
$$
r_1(p^\nu)=\left\{
\begin{array}{ll}
\nu+1, & \mbox{if $p\mid H$},\\
r(p^\nu) , & \mbox{otherwise}.
\end{array}
\right.
$$
Then we have
\begin{align*}
r(L_1(\ma{x}))r(L_2(\ma{x}))r(L_3(\ma{x}))
&\leq r(a_4^3)r(B_1B_2B_3)\prod_{i=1}^3r_1(A_i'(m)+B_i'n)
\\&\ll L_\infty^\varepsilon r_1\big(G_m(n)\big),
\end{align*}
where $G_m(X):=\prod_{i=1}^3 (A_i'(m)+B_i'X)$ is a primitive cubic
polynomial with coefficients bounded in size by $O(L_\infty^6)$.

Bringing all of this together we have so far shown that 
$$
S_0(m)\ll L_\infty^\varepsilon\sum_{n\leq r_\infty X}r_1(G_m(n)).
$$
It now follows from \cite[Theorem 2]{nair} that there exists an
absolute constant $c_0>0$ such that
$$
S_0(m)
\ll L_\infty^\varepsilon r_\infty X(\log\log m)^{c_0}
\ll
L_\infty^\varepsilon r_\infty X(\log\log X')^{c_0},
$$
since visibly $S_0(m)=0$ unless $m \leq r'X=X'$. This completes the
proof of the lemma.
\end{proof}

It remains to consider the sum $\sum_{m\in \mcal{B}}|C(m)|$ in
\eqref{eq:2311.3}. It is precisely at this point that our
argument diverges from the proof of Heath-Brown.  
Define the function 
\begin{equation}
  \label{eq:Q}
Q(\lambda):=\lambda\log\lambda-\lambda+1.
\end{equation}
Then we have
$$
\max_{\lambda\in (1,2)}\min\{ Q(\lambda) ,2Q(\lambda
/2)\} =Q(1/\log 2)=2Q(1/(2\log 2))=\eta, 
$$
where $\eta$ is given by \eqref{defeta}.
With this in mind, we have the following result.

\begin{lem}\label{l:2311.5}
We have 
$$
\sum_{m\in \mcal{B}}|C(m)|\ll \frac{r'X(\log\log X')^{9/4}}{(\log X')^{\eta}}.
$$
\end{lem}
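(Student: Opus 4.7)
The plan is to decompose the divisor range $(Y, X'/Y]$ dyadically with ratio $\lambda \in (1,2)$ to be optimized, apply Cauchy--Schwarz on each block, and balance a Ford-type bound on integers with a divisor in a short interval against an $L^2$ bound that exploits cancellation in the non-principal character $\chi$.

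Set $y_k := Y\lambda^k$ and $I_k := (y_k, \lambda y_k]$. Since $Y \asymp X'^{1/2}(\log X')^{-O(1)}$, one has $\log(X'/Y^2) \asymp \log\log X'$, so the range $(Y, X'/Y]$ is covered by only $K = O(\log\log X')$ dyadic intervals $I_k$; this observation is crucial for keeping the polylogarithmic factor under control. Write $C_k(m) := \sum_{d\mid m,\, d\in I_k}\chi(d)$, so that $|C(m)| \leq \sum_{k<K} |C_k(m)|$ by the triangle inequality.

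For each $k$, Cauchy--Schwarz yields
$$\sum_m |C_k(m)| \leq N_k^{1/2} M_k^{1/2},$$
where $N_k := \#\{m\le X': \exists d\mid m,\ d\in I_k\}$ and $M_k := \sum_{m\le X'}|C_k(m)|^2$. Ford's theorem on integers with divisors in short intervals gives $N_k \ll X' (\log X')^{-Q(\lambda)}(\log\log X')^{3/2}$. The delicate step is the matching $L^2$ bound $M_k \ll X'(\log X')^{-2Q(\lambda/2)}(\log\log X')^{O(1)}$. The exponent $2Q(\lambda/2)$ arises naturally because $|C_k(m)|^2$ expands into a sum over pairs $(d_1,d_2)$ of divisors of $m$ in $I_k$, and integers with two such divisors are rarer (by a Ford--Tenenbaum argument for pairs of divisors in short intervals), each of the two divisors contributing an independent exponent of $Q(\lambda/2)$. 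To establish this one expands the square, applies the $\hcf$-substitution $d_i = ge_i$ with $(e_1,e_2)=1$, exploits the convergence $\sum_n \chi(n)/n = L(1,\chi) = \pi/4$ to extract cancellation in the inner sums over $e_1,e_2$, and invokes Tenenbaum-style estimates for integers with pairs of divisors in a short interval.

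Combining the two bounds and choosing $\lambda = 1/\log 2$, the point at which $Q(\lambda) = 2Q(\lambda/2) = \eta$, gives
$$\sum_m |C_k(m)| \ll X' (\log X')^{-\eta}(\log\log X')^{5/4}$$
for each $k$. Summing over the $K = O(\log\log X')$ dyadic blocks yields the stated bound, the remaining factor $\log\log X'$ combining with the $(\log\log X')^{5/4}$ per block to produce the final $(\log\log X')^{9/4}$.

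The main obstacle will be the sharp $L^2$ estimate on $M_k$. Obtaining the correct exponent $2Q(\lambda/2)$ requires a careful combination of non-principal character sum analysis in the spirit of Selberg--Delange together with Ford--Tenenbaum bounds on the anatomy of integers having multiple divisors in short intervals; this is precisely the point where the argument improves upon Heath-Brown's original exponent $\eta/2$, and it is here that the authors' acknowledged input from Tenenbaum enters.
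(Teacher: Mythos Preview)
Your proposal misidentifies the role of the optimisation parameter $\lambda$, and this is a genuine gap rather than a cosmetic difference. You take $\lambda\in(1,2)$ to be the ratio in a dyadic decomposition of the divisor range $(Y,X'/Y]$, and then claim Ford-type bounds $N_k\ll X'(\log X')^{-Q(\lambda)}$ and $M_k\ll X'(\log X')^{-2Q(\lambda/2)}$. Neither claim is correct with this interpretation of $\lambda$. Ford's theorem gives $H(X',y,\lambda y)\asymp X'(\log y)^{-\eta}(\log\log y)^{-3/2}$ for \emph{any} fixed $\lambda\in(1,2]$; the exponent $\eta$ does not vary with the ratio. In particular your claim for $\lambda$ close to $2$ asserts an exponent $Q(2)=2\log 2-1\approx 0.386$, which is strictly stronger than Ford's sharp exponent $\eta\approx 0.086$, and hence false. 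Conversely, as $\lambda\to 1^+$ your bound weakens while the true count $N_k$ shrinks, so the optimisation you describe cannot produce $\eta$. The heuristic that ``each of the two divisors contributes an independent exponent $Q(\lambda/2)$'' to $M_k$ is likewise not a statement that follows from any Ford--Tenenbaum result about divisors in short intervals.

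In the paper's proof the parameter $\lambda$ has a completely different meaning: it is the normalised prime-factor count $\Omega(m,E)/E(X')$, where $E$ is the set of primes up to $Y$. The argument proceeds in two steps. First one proves the weighted second-moment bound
\[
\sigma(X';v):=\sum_{m\le X'}|C(m)|^2 v^{\Omega(m)}\ll X'(\log\log X')^3(\log Y)^{2v-2}
\]
uniformly for $v\in[0,1]$, by expanding $|C(m)|^2$, extracting $h=\hcf(d_1,d_2)$, and using cancellation in the $\chi$-sums over the cofactors. Second, one partitions the integers $m\le X'$ according to whether $\Omega(m,E)$ lies below $E(X')/\log 2$, between $E(X')/\log 2$ and $2E(X')$, or above $2E(X')$. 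On the middle range one couples Cauchy--Schwarz with the large-deviation bound $\#\{m\le x:\Omega(m,E)\ge\lambda E(x)\}\ll x(\log x)^{-Q(\lambda)}(\log\log x)^{-1/2}$ and with $\sigma(X';\lambda/2)$, inserting the weight $(\lambda/2)^{-\Omega(m)}$; this is where $Q(\lambda)$ and $2Q(\lambda/2)$ genuinely meet and balance at $\lambda=1/\log 2$. The small and large ranges are handled separately via the Sathe--Selberg formula and Cauchy--Schwarz with $v=1$ respectively. You have correctly located the balance identity $Q(1/\log 2)=2Q(1/(2\log 2))=\eta$, but the mechanism that produces these two exponents is a decomposition in $\Omega(m)$, not in the divisor variable.
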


In view of the fact that $|C(m)| \geq 1$ for any $m$ such that
$C(m)\neq 0$, we deduce from \cite[part (ii) of Theorem 21]{HT} that 
one cannot hope to do much better than this estimate, since
up to multiplication by powers of $\log\log X'$ 
it is the true order of magnitude of the set $\mcal{B}$.

\begin{proof}[Proof of Lemma \ref{l:2311.5}]
Define the sum 
$$
\sigma(X';v):=\sum_{1\le m\le X'}|C(m)|^2v^{\Omega(m)},
$$
for any real number $v\in [0,1]$, where $\Omega(m)$ denotes the
total number of prime factors of $m$.
A crucial ingredient in the proof of Lemma \ref{l:2311.5} will be the estimate
\begin{equation}\label{majsigma}
\sigma(X';v) \ll X'(\log\log X')^3 (\log Y)^{2v-2}.
\end{equation} 
This coincides with the estimate obtained by Heath-Brown in \cite[\S
5]{h-b03} when $v=1$.  To establish \eqref{majsigma} we begin by expanding
$|C(m)|^2$ and drawing out the highest common factor of the variables
involved. This gives
\begin{align*}
|C(m)|^2=\sum_{h \mid m} \chi(h^2) 
\sum_{\colt{k_1\mid m/h}{Y<hk_1\leq
 X'/Y}}\chi(k_1)
\sum_{\tolt{k_2\mid m/hk_1}{Y<hk_2\leq
 X'/Y}{\hcf(k_1,k_2)=1}}\chi(k_2).
\end{align*}
Once substituted into $\sigma(X';v)$, let us write $\sigma_1$
for the overall contribution from $h \leq Y$ and $\sigma_2$ for
the contribution from the remaining $h$. Note that
we must have $Y<h \leq X'/Y$ in $\sigma_2$, since $h \leq hk_1\leq X'/Y$.  Write $Z:=X'/Y$. Then we have 
\begin{align*} 
\sigma_1
&=
\sum_{h\leq Y}\chi(h^2)v^{\Omega( h)}\sum_{Y/h< k_1 \leq Z/h}
\chi(k_1) v^{\Omega(k_1)}\sum_{n< Z/k_1} v^{\Omega(n)}
\sum_{k_2}\chi(k_2)v^{\Omega(k_2 )},
\end{align*}
where the final summation is over integers $k_2$ such that $\hcf(k_1,k_2)=1$
and $Y/h<k_2\leq \min \{Z/h, X'/hk_1n\}$. 
Here the inequality $n< Z/k_1$ follows from the two inequalities
$n\leq X'/hk_1k_2$ and $hk_2>Y$.  We will need the basic estimates 
\begin{equation} 
  \label{eq:112.2}
\sum_{n\leq x} v^{\Omega(n)}\ll x(\log 2x)^{v-1}, 
\end{equation}
and 
\begin{equation}
  \label{eq:112.1}
  \sum_{\colt{k_2\leq x}{\hcf(k_1,k_2)=1}}\chi(k_2)v^{\Omega(k_2  )}
\ll \tau(k_1)x\exp\{-3\sqrt{\log 2x}\},
\end{equation}
for any $v\in [0,1]$. 
When $k_1=1$ the latter bound follows from the fact that the 
corresponding 
Dirichlet series can be embedded holomorphically into a zero-free region
for $L(s,\chi)$. The general case then follows from an application of 
M\"obius inversion.

For fixed values of $h$ and $k_1$, 
\eqref{eq:112.1} and \eqref{eq:112.2} imply that the 
overall contribution to $\sigma_1$ from $n \leq X'/Zk_1$ is 
\begin{align*}
&\ll \frac{\tau(k_1)Z}{h} \exp\{-3\sqrt{\log 2Y/h}\}\sum_{n \leq X'/Zk_1} v^{\Omega(n)} \\
&\ll \frac{\tau(k_1)X'}{hk_1}(\log (2\max\{1,hY^2/X'\}))^{v-1}\exp\{-3\sqrt{\log 2Y/h}\}. 
\end{align*}
Here we have used the fact that $ X'/Zk_1 \geq hX'/Z^2=hY^2/X'$, since 
$k_1\leq Z/h$.  Next, on breaking the interval into dyadic intervals we deduce from \eqref{eq:112.2} that 
\begin{align*}
\sum_{Y/k_1<n\leq Z/k_1} \frac{v^{\Omega(n)}}{n}
&\ll \log (X'/Y^2) \max_{H>hY/Z} \sum_{H<n\leq 2H} \frac{v^{\Omega(n)}}{n}\\
&\ll \log (X'/Y^2)(\log (2\max\{1,hY^2/X'\}))^{v-1}, 
\end{align*}
for $v\in [0,1]$. For fixed values of $h$ and $k_1$, it therefore follows from 
\eqref{eq:112.1} that the
contribution from $n > X'/Zk_1$ is 
\begin{align*}
&\ll \frac{\tau(k_1)X'}{hk_1} \exp\{-3\sqrt{\log 2Y/h}\}\sum_{Y/k_1<n \leq Z/k_1} \frac{v^{\Omega(n)}}{n} \\
&\ll \frac{\tau(k_1)X'}{hk_1}\log (X'/Y^2)
(\log (2\max\{1,hY^2/X'\}))^{v-1}\exp\{-3\sqrt{\log 2Y/h}\}. 
\end{align*}
Combining these estimates with partial summation,
we therefore deduce that
\begin{align*}
\sigma_1 &\ll X'(\log\log X') \sum_{h\leq Y}\Big(\frac{ v^{\Omega( h)}}{h} (\log
(Z/h))^2 (\log (2\max\{1,hY^2/X'\}))^{v-1}\\
&\qquad\qquad\qquad\qquad\qquad\qquad\times \exp\{-3\sqrt{\log 2Y/h}\}\Big)\\ 
&\ll X'(\log\log X')^3(\log Y)^{2v-2},
\end{align*}
which is satisfactory for \eqref{majsigma}.

To bound  $\sigma_2$, we estimate trivially the sum over $k_2$ as $\min
\{Z/h, X'/hk_1n\}$. Arguing as above, it follows that 
\begin{align*} 
\sigma_2
&\ll X'\log (X'/Y^2)\sum_{Y<h\leq Z}\frac{ v^{\Omega( h)}}{h}\sum_{ k_1 \leq Z/h }
\frac{(\log Y)^{v-1} }{ k_1}\\&\ll  X'(\log\log X')^3(\log Y)^{2v-2}.
\end{align*}
This therefore completes the proof of \eqref{majsigma}.

The rest of the argument is inspired by the proof of \cite[Theorem 21(ii)]{HT}.
Let $E:=\{\mbox{$p$ prime}: 2<p\leq Y\}$, and introduce the quantities
$$
\Omega(m,E):=\sum_{\colt{p^\nu\parallel m}{p\in E}}\nu,\qquad
E(x):=\sum_{\colt{p \leq x}{p\in E}}\frac{1}{p},
$$ 
for any $m\in \N$ and any $x>0$.  
We will make use of the well-known bound (cf. \cite[Exercise 04]{HT})
\begin{equation}
  \label{eq:classic}
\#\{ m\leq x:  \Omega(m,E)\geq \lambda E(x)\} 
\ll \frac{x}{(\log
x)^{Q(\lambda)}(\log\log x)^{1/2}},
\end{equation}
where $Q$ is given by \eqref{eq:Q}, and which is valid for any $\lambda\in [1,2]$.
We observe that
\begin{equation}
  \label{eq:2411.1}
\sum_{m\in \mcal{B}}|C(m)| \leq  \sum_{1 \leq m\leq X'}
\Big|\sum_{\colt{d\mid m}{Y<d\leq Z}}\chi(d)\Big| ,
\end{equation}
where
$$ 
Y=\frac{{X'}^{1/2}}{(\log X')^{2A+2}}, \quad
Z=\frac{X'}{Y}={X'}^{1/2}(\log X')^{2A+2}.
$$
We will break the sum over $m$ into three parts.

Let $\mcal{B}_1$ denote the set of positive integers $m \leq X'$
such that 
$$
\Omega(m,E)\leq   E(X')/\log 2,
$$
let $\mcal{B}_2$ denote the corresponding set for which 
$$
E(X')/\log 2< \Omega(m,E) \leq  2E(X'), 
$$
and let $\mcal{B}_3$ denote the remaining set of positive integers $m
\leq X'$. 
We will write $S_j=\sum_{m \in \mcal{B}_j}
|\sum_{d}\chi(d)|$, for $1\leq j\leq 3$, with the conditions on $d$ as in \eqref{eq:2411.1}.
We then have
\begin{align*} 
S_1&\leq \sum_{m\in \mcal{B}_1}\sum_{\colt{d\mid m}{Y<d\leq Z}}1
=\sum_{ h+k\leq  E(X')/\log 2}\sum_{\colt{Y<d\leq Z}{\Omega(d,E)=h}}\sum_{\colt{n\leq
X'/d}{\Omega(n,E)=k}}1.
\end{align*}
Since $E(X'/d)=E(X')$ for $d\leq Z$, an application of \cite[Theorem 08]{HT} yields 
$$
\sum_{\colt{n\leq
X'/d}{\Omega(n,E)=k}}1\ll \frac{X'}{d}\exp\{-E(X')\}\frac{E(X')^k}{k!},
$$
uniformly for $k\leq (3-\varepsilon)E(X')$. Hence a  
repeated application of  \cite[Theorem 08]{HT} reveals that
$$
\sum_{\colt{Y<d\leq Z}{\Omega(d,E)=h}}\sum_{\colt{n\leq
X'/d}{\Omega(n,E)=k}}1\ll 
X' \log (Z/Y) \exp\{-2E(X')\}\frac{E(X')^h}{h!}\frac{E(X')^k}{k!},
$$
uniformly for $h,k\leq (3-\varepsilon)E(X')$. It is clear that
$\log (Z/Y)\ll \log\log X'$
and 
\begin{equation}
  \label{eq:0412.1}
E(X')=E(Y)=\log\log Y+O(1) = \log\log X'+O(1).
\end{equation}
Moreover, the binomial theorem implies that 
$$
\ell!\sum_{h+k=\ell}\frac{1}{h!k!}=\sum_{0\leq h\leq \ell}\frac{\ell!}{h!(\ell-h)!}=2^\ell,
$$
for fixed $\ell$.  We therefore deduce 
from \cite[Theorem 09]{HT} that
\begin{align*}
S_1&\ll X' \log\log X' \sum_{ \ell\leq  E(X')/\log 2}
\exp\{-2E(X')\}\frac{(2E(X'))^{\ell}}{\ell!}\\  &\ll X'(\log\log X')^{1/2}\exp\{
-2Q(1/(2\log 2))E(X')\}\\ 
&\ll X'(\log\log X')^{1/2} (\log X')^{-\eta},
\end{align*}
which is satisfactory for the lemma.

We now turn to $S_2$. Let $S_2(\ell)$ denote the overall contribution to $S_2$ from $m$ such
that $\Omega(m,E)=\ell$. There are clearly 
$O(\log\log X')$ possible values for $\ell$. Write $\ell=\lambda E(X')$, for
some $\lambda \in (1/\log 2,2]$. 
Then on combining the Cauchy--Scharwz inequality with \eqref{majsigma}
and \eqref{eq:classic}, we obtain
\begin{align*}
S_2(\ell)^2&\ll \frac{X' }{(\log X')^{Q(\lambda)}(\log\log X')^{1/2} }
\Big((\lambda/2)^{-\lambda E(X')}\sigma (X',\lambda/2)\Big)\\
&\ll  \frac{{X'}^2 (\log\log X')^{5/2} }{
(\log X')^{Q(\lambda) +\lambda(\log (\la/2)-1)+2} },
\end{align*}
since $E(X')=\log\log X'+O(1)$ by \eqref{eq:0412.1}.
Hence it follows that 
$$
S_2=\sum_{\ell\ll \log\log X'} S_2(\ell) \ll  \frac{{X'} (\log\log X')^{9/4} }{
(\log X')^{Q(\lambda)/2 +\lambda(\log (\la/2)-1)/2+1} }. 
$$
This is satisfactory for the statement of the lemma, since
$$
Q(\lambda)/2 +\lambda(\log (\la/2)-1)/2+1\geq Q(1/\log 2),
$$
for $\lambda\geq 1/\log 2$.

It remains to deal with the sum $S_3$, which corresponds to a summation over 
positive integers $m\leq X'$ for which 
$\Omega(m,E)>  2E(X').$
For this we will combine the Cauchy--Schwarz
inequality with \eqref{majsigma} for $v=1$ and the bound
\eqref{eq:classic}, to deduce that
$$
S_3\ll \Big(\frac{X' \sigma(X',1)}{(\log X')^{Q(2)} (\log\log X')^{1/2}}\Big)^{1/2}\ll
\frac{X' (\log\log X')^{5/4}}{(\log X')^{Q(2)/2} }.
$$
This too is satisfactory for the statement of the lemma, since 
$Q(2)/2>\eta$,  and so completes its proof.
\end{proof}

Combining Lemmas \ref{l:2311.4} and \ref{l:2311.5} in
\eqref{eq:2311.3}, we may now conclude that there exists an absolute
constant $c_1>0$ such that 
$$
S_0\ll \frac{L_\infty^\varepsilon  r_\infty r' X^2 (\log\log
  X')^{c_1}}{(\log X')^{\eta}} \ll 
\frac{L_\infty^\varepsilon  r_\infty r' X^2 }{(\log X')^{\eta-\ve}}
\ll
\frac{L_\infty^\varepsilon  r_\infty r' X^2 }{(\log X)^{\eta-\ve}},
$$
since we have assumed that $r'X^{1-\ve}\geq 1$ in the statement of
Theorem \ref{main0}. Once inserted into Lemma \ref{lem31}, this
therefore completes the proof of the theorem.

\section{Linear transformations}

Our proof of Theorems \ref{main1} and \ref{main2} will involve first
establishing the relevant estimate for a specific choice of $j \in
\{*,0,1\}$. The corresponding estimate for the remaining values of $j$
will be obtained via simple changes of variables.
Thus it will be important to consider the effect of
linear transformations on the sums \eqref{eq:Sj},  and that is the
purpose of the present section.

We begin by recording a preliminary result from  group theory.  For any
group $G$ and any subgroup $H \subseteq G$, write $[G:H]$
for the index of $H$ in $G$.

\begin{lem}\label{lem:group}
Let $A, B $ be subgroups of finite index in a group $G$,
such that $[G:A]$ and $[G:B]$ are coprime.  Then we have
$$
[G:A\cap B]=[G:A][G:B].
$$
\end{lem}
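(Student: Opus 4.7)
The plan is to combine two standard observations about indices: a divisibility statement coming from the inclusions $A\cap B\subseteq A$ and $A\cap B\subseteq B$, together with the usual upper bound $[G:A\cap B]\leq [G:A][G:B]$.

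First I would use the tower identity $[G:A\cap B]=[G:A]\,[A:A\cap B]$, which is valid whenever the indices are finite, to conclude that $[G:A]$ divides $[G:A\cap B]$. By symmetry, $[G:B]$ also divides $[G:A\cap B]$. Since $[G:A]$ and $[G:B]$ are coprime by hypothesis, their product $[G:A]\,[G:B]$ must divide $[G:A\cap B]$.

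For the reverse inequality I would appeal to the injection of coset spaces $G/(A\cap B)\hookrightarrow G/A\times G/B$ sending $g(A\cap B)\mapsto (gA, gB)$. This is well defined and injective because $gA=hA$ and $gB=hB$ force $g^{-1}h\in A\cap B$. Hence $[G:A\cap B]\leq [G:A]\,[G:B]$. Combining the two bounds yields $[G:A\cap B]=[G:A]\,[G:B]$, as required.

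The argument is elementary and there is no real obstacle; the only point worth emphasising is that coprimality of the indices is precisely what upgrades the two separate divisibility relations into divisibility by the product, a step that would fail for general indices.
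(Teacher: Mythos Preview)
Your proof is correct and follows essentially the same approach as the paper: both combine the tower identity $[G:A\cap B]=[G:A][A:A\cap B]$ (yielding divisibility by $[G:A][G:B]$ via coprimality) with the upper bound $[G:A\cap B]\leq [G:A][G:B]$. The only cosmetic difference is that the paper obtains the upper bound by showing each non-empty intersection $xA\cap yB$ is a single coset of $A\cap B$, whereas you phrase it as an injection $G/(A\cap B)\hookrightarrow G/A\times G/B$; these are two sides of the same observation.
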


\begin{proof}
For any $x,y\in G$ we 
claim that either $xA\cap yB$ is empty, or else it is a left coset
of $A\cap B$ in $G$. Indeed, supposing that 
$xA\cap yB$ is non-empty, we let $c\in 
xA\cap yB$. Note that $xA=cA$ and
$yB=cB$. But then it follows that
$$
xA\cap yB=cA\cap cB=c(A\cap B)
$$
as required.  Thus it follows that the total number of left
cosets of $A\cap B$ in $G$ is
$$
[G:A\cap B]\leq [G:A][G:B].
$$
However, by Lagrange's theorem we have $[G:A\cap B]=[G:A][A:A\cap B]$,
whence $[G:A]$ divides $[G:A\cap B]$. Similarly,
$[G:B]$ divides $[G:A\cap B]$. Thus it follows that
$$
[G:A][G:B]\leq [G:A\cap B],
$$
since $\hcf([G:A],[G:B])=1$.
Once coupled with our upper bound for $[G:A\cap B]$, this completes
the proof of the lemma.
\end{proof}

It will be useful to have a convenient way of referring
back to the statements of our main results.
Let us say that ``Hypothesis-$(j,k)$'' holds if
$S_j(X;\ma{d},\sfg_{\ma{D}})$  satisfies the asymptotic formula
described in Theorem~\ref{main2}  for all
$L_1,\ldots,L_4,\mcal{R}$ that satisfy \textsf{NH}$_k(\ma{d})$. 
Thus Hypothesis-$(j,k)$ amounts to the established existence of an asymptotic
formula 
$$
S_j(X;\ma{d},\sfg_{\ma{D}})=\delta_{j,k}(\A) C_0X^2 +
O\Big(\frac{D^\ve L_\infty^{ \varepsilon}r_\infty r'X^2}{(\log X)^{ \eta-\varepsilon}}\Big),
$$
for $r'X^{1-\ve}\geq 1$, under the assumption that
\textsf{NH}$_k(\ma{d})$ holds. Here
\begin{equation}
   \label{eq:constant}
   C_0=C_{0}(L_1,\ldots,L_4;\ma{d},\sfg_{\ma{D}},\mcal{R}):=
\frac{\pi^4 \meas(\mcal{R})}{\det \sfg_{\ma{D}} } \prod_{p>2}\sigma_p,
\end{equation}
and $\sigma_p$ is given by \eqref{eq:rho0} and \eqref{eq:sig}.

Let $L_1,\ldots,L_4\in\Z[x_1,x_2]$ be binary linear forms, and let
$\mcal{R}\subset \R^2$.  Let $(\ma{d}, \ma{D})\in \mcal{D}$, where $\mcal{D}$
is given by \eqref{eq:D}, and set
\begin{equation}
   \label{eq:X}
   \mcal{X}:=\sfg_{\ma{D}}\cap X\mcal{R}.
\end{equation}
Then for a given matrix
$\M\in \mathrm{GL}_2(\Z)$, 
we define the sum
$$
S_{\M}:=\sum_{\colt{\y\in \Z^2, ~\M\y\in \mcal{X}}{2\nmid y_1, ~y_2\equiv
   j\bmod{2}}}
r\Big(\frac{L_1(\ma{My})}{d_1}\Big)
r\Big(\frac{L_2(\ma{My})}{d_2}\Big)r\Big(\frac{L_3(\ma{My})}{d_3}\Big)
r\Big(\frac{L_4(\ma{My})}{d_4}\Big).
$$
Here, as throughout this paper, we let
$\mathrm{GL}_2(\Z)$ denote the set of non-singular $2\times 2$ integer
valued matrices with non-zero determinant.
Note that $S_{\M}$ depends on $X, \ma{d}, \ma{D}, L_1,\ldots,L_4$
and $j$, in addition to $\ma{M}$. In particular we have
$S_{\M}=S_{j}(X;\ma{d},\sfg_{\ma{D}})$, when
$\M$ is the identity matrix.
In general let us write $\|\M\|$ to denote the maximum modulus of the
coefficients of $\M$. Bearing all this notation in mind, the following elementary result will
prove  useful.

\begin{lem}\label{lem:linear}
Let $(j,k) \in \{*,0,1\}\times \{0,1,2\}$ and suppose
Hypothesis-$(j,k)$ holds.
Let $\M\in \mathrm{GL}_2(\Z) $ such that $\det \ma{M}=2^m$ for some $m
\in \Z_{\geq 0}$,
  and define
$M_i(\y):=L_i(\ma{My})$. Let $\ve>0$ and suppose that 
$r'(L_1,\ldots,L_4,\mcal{R})X^{1-\ve}\geq 1$. 
Assume that $M_1,\ldots,M_4,\mcal{R}$ satisfy \textsf{NH}$_k(\ma{d})$. 
Then we have
$$ 
S_{\M}=\frac{\delta_{j,k}(\A\M) C_0}{\det \M} X^2 +
O\Big(\frac{D^\ve L_\infty^{ \varepsilon} \|\M\|^{ \varepsilon}r_\infty(\mcal{R}_\ma{M}) r'
 X^2}{(\log X)^{ \eta-\varepsilon}}\Big),
$$
where $D=D_1 \cdots D_4$, 
$L_\infty=L_\infty(L_1,\ldots,L_4)$,  $r'=r'(L_1,\ldots,L_4,\mcal{R})$, 
and
\begin{equation}
  \label{eq:2311.6}
\mcal{R}_{\M}:=\{\M^{-1}\z: \z\in \mcal{R}\}.
\end{equation}

\end{lem}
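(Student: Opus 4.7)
The plan is to recast $S_{\ma{M}}$ as a sum of type $S_j$ for the new data $(M_1,\ldots,M_4;\,\ma{d},\,\sfg_{\ma{D}}(M_1,\ldots,M_4);\,\mcal{R}_{\ma{M}})$ and to apply Hypothesis-$(j,k)$ directly. The conditions $\ma{M}\y\in\sfg_{\ma{D}}$ and $\ma{M}\y\in X\mcal{R}$ are equivalent to $\y\in\sfg_{\ma{D}}(M_1,\ldots,M_4):=\{\y\in\Z^2:D_i\mid M_i(\y)\}$ and $\y\in X\mcal{R}_{\ma{M}}$, respectively, so that $S_{\ma{M}}$ is exactly the sum $S_j(X;\ma{d},\sfg_{\ma{D}}(M_1,\ldots,M_4))$ computed with the forms $M_i$ in place of $L_i$ and with region $\mcal{R}_{\ma{M}}$ in place of $\mcal{R}$. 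The properties (i)--(iii) of \textsf{NH}$_k(\ma{d})$ for $\mcal{R}_{\ma{M}}$ are inherited from those of $\mcal{R}$ (invertible linear images preserve openness, boundedness, convexity and piecewise smoothness of the boundary, and $M_i(\y)=L_i(\ma{M}\y)=L_i(\z)>0$ for $\z\in\mcal{R}$); the remaining $2$-adic condition is given by hypothesis. Hypothesis-$(j,k)$ therefore yields
\begin{equation*}
S_{\ma{M}} = \delta_{j,k}(\ma{A}')\,C_0'\,X^2 + O(\cdots),
\end{equation*}
where $\ma{A}'$ is the coefficient matrix of $(M_3,M_4)$ and $C_0'$ is the constant \eqref{eq:constant} for the new data. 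A direct computation from $M_i(\y)=L_i(\ma{M}\y)$ and \eqref{L3L4} gives $\ma{A}'=\ma{A}\ma{M}$, matching the main term claimed.

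It remains to show $C_0'=C_0/\det\ma{M}$. By the change of variables formula $\meas(\mcal{R}_{\ma{M}})=\meas(\mcal{R})/\det\ma{M}$. For the lattice determinants and the Euler factors, note that for any tuple $\ma{D}'$ with odd coordinates,
\begin{equation*}
\sfg_{\ma{D}'}(M_1,\ldots,M_4)=\ma{M}^{-1}\bigl(\sfg_{\ma{D}'}(L_1,\ldots,L_4)\cap\ma{M}\Z^2\bigr).
\end{equation*}
Since $[\Z^2:\sfg_{\ma{D}'}(L_1,\ldots,L_4)]$ divides $D_1'\cdots D_4'$ (odd) while $[\Z^2:\ma{M}\Z^2]=2^m$, Lemma~\ref{lem:group} yields $\det(\sfg_{\ma{D}'}(L_i)\cap\ma{M}\Z^2)=2^m\det\sfg_{\ma{D}'}(L_i)$; the subsequent application of $\ma{M}^{-1}$ rescales covolumes by $1/2^m$, so $\det\sfg_{\ma{D}'}(M_i)=\det\sfg_{\ma{D}'}(L_i)$. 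Taking $\ma{D}'=\ma{D}$ handles the factor $\det\sfg_{\ma{D}}$ in \eqref{eq:constant}. Taking instead $\ma{D}'=([D_i,d_ih_i])_i$---still odd, since $(\ma{d},\ma{D})\in\mcal{D}$ and the $h_i$ occurring in \eqref{eq:sig} are powers of odd primes---shows that $\rho_0(\ma{h})$ coincides for the $M_i$ and the $L_i$, whence the local factors $\sigma_p$ for $p>2$ are unchanged. Combining the three identities gives $C_0'=C_0/\det\ma{M}$ as required.

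For the error term, the elementary bound $L_\infty(M_1,\ldots,M_4)\leq 2L_\infty\|\ma{M}\|$ absorbs into the $L_\infty^{\varepsilon}\|\ma{M}\|^{\varepsilon}$ factor; the quantity $r_\infty(\mcal{R}_{\ma{M}})$ appears directly; and $r'(M_1,\ldots,M_4,\mcal{R}_{\ma{M}})=r'(L_1,\ldots,L_4,\mcal{R})$ because $M_i(\y)=L_i(\z)$ as $\y=\ma{M}^{-1}\z$ ranges over $\mcal{R}_{\ma{M}}$ with $\z$ ranging over $\mcal{R}$. In particular the hypothesis $r'X^{1-\varepsilon}\geq 1$ transfers unchanged, and the error term delivered by Hypothesis-$(j,k)$ coincides with the one in the statement. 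The principal obstacle is the bookkeeping behind the three identities for $\meas$, $\det\sfg_{\ma{D}}$, and the Euler factors $\sigma_p$; each reduces to the coprimality between the odd index $[\Z^2:\sfg_{\ma{D}'}(L_i)]$ and the $2$-power index $\det\ma{M}$, handled uniformly by Lemma~\ref{lem:group}.
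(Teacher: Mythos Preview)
Your proof is correct and follows essentially the same route as the paper's: rewrite $S_{\ma{M}}$ as an $S_j$-sum for the transformed data $(M_i,\mcal{R}_{\ma{M}})$, apply Hypothesis-$(j,k)$, and then identify the resulting constant with $C_0/\det\ma{M}$ by showing via Lemma~\ref{lem:group} that $\det\sfg_{\ma{D}'}(M_i)=\det\sfg_{\ma{D}'}(L_i)$ for every odd $\ma{D}'$ (which simultaneously handles $\det\sfg_{\ma{D}}$ and the factors $\rho_0$ in each $\sigma_p$). Your treatment of the error term, the observation $\ma{A}'=\ma{A}\ma{M}$, and the invariance of $r'$ under the change of variables all match the paper's argument.
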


It is important to note that the definition of $\sigma_p$ that appears
in \eqref{eq:constant} is precisely as in
\eqref{eq:sig}. Thus it involves lattices that depend
on $L_1,\ldots,L_4$, rather than $M_1,\ldots, M_4$. The net outcome
  of Lemma \ref{lem:linear} is that
for linear transformations that preserve the relevant normalisation
conditions and have determinant $2^m$ for some $m \geq 0$, the main term of the corresponding asymptotic formula
should be multiplied by 
$
\delta_{j,k}(\A\M)(\delta_{j,k}(\A)\det \M)^{-1}.
$

\begin{proof}[Proof of Lemma \ref{lem:linear}]
Recall the definition \eqref{eq:X} of $\mcal{X}$, and the notation
introduced in \eqref{eq:lattice}.
  We begin by noting
that $\M\y \in \mcal{X}$ if and only if
$\y \in \sfl_{\M}\cap \mcal{R}_{\M}$, where
$$
\sfl_{\M}:=\{\y\in \Z^2: D_i \mid L_i(\ma{My})\} =
\sfg(\ma{D};M_1,\ldots,M_4),
$$
and $\mcal{R}_{\M}$ is given by \eqref{eq:2311.6}.
Moreover, $M_1,\ldots,M_4, \mcal{R}_{\M}$ will satisfy
\textsf{NH}$_k(\ma{d})$ if $M_1,\ldots,M_4, \mcal{R}$ do. 
We claim that
\begin{equation}
   \label{eq:claim}
   \det \sfl_\ma{M}=\det \sfg(\ma{D};M_1,\ldots,M_4)=\det \sfg(\ma{D};L_1,
\ldots,L_4),
\end{equation}
for any matrix $\ma{M}\in \mathrm{GL}_2(\Z)$ such that
$\hcf(\det\ma{M}, D)=1$. In particular, since $\ma{M}$ has 
determinant $2^m$ for some $m\in \Z_{\geq 0}$, this holds for any
$\ma{D}\in\N^4$ such that $2\nmid D$. Assume \eqref{eq:claim}  to 
be true for the moment, and note that
$$
\meas(\mcal{R}_{\M})=\frac{\meas(\mcal{R})}{\det\M},\qquad
r'(M_1,\ldots,M_4, \mcal{R}_\M)=r'( L_1,\ldots,L_4,\mcal{R})=r',
$$
in the notation of \eqref{eq:r'}. Recalling the definitions in
\eqref{eq:Linf} and \eqref{eq:rinf}, 
we therefore deduce from Hypothesis-$(j,k)$ that
\begin{align*} 
S_{\M}
=&\frac{\delta_{j,k}(\A\M) 
\pi^4 \meas(\mcal{R}_{\ma{M}})}{\det
\sfg(\ma{D};M_1,\ldots,M_4) )}X^2 \prod_{p>2}\sigma_p'\\ &\quad +
O\Big(D^\ve L_\infty(M_1,\ldots,M_4)^{ \varepsilon}r_\infty(\mcal{R}_\M) r' 
\frac{X^2}{(\log X)^{\eta-\varepsilon}}\Big)\\ 
=&\frac{\delta_{j,k}(\A\M) 
\pi^4 \meas(\mcal{R})}{(\det \ma{M})(\det \sfg(\ma{D};L_1,\ldots,L_4))
)}X^2 \prod_{p>2}\sigma_p' \\
&\quad +
O\Big( D^\ve L_\infty(M_1,\ldots,M_4)^{ \varepsilon}r_\infty(\mcal{R}_\M) r' 
\frac{X^2}{(\log X)^{\eta-\varepsilon}}\Big),
\end{align*}
where
$$
\sigma_p'=
\Big(1-\frac{\chi(p)}{p}\Big)^4
\sum_{a,b,c,d=0}^{\infty}\chi(p)^{a+b+c+d}\rho_0(p^a,p^b,
p^c,p^d;\ma{D};M_1,\ldots,M_4)^{-1}.
$$
On noting that $L_\infty(M_1,\ldots,M_4) \leq
L_\infty(L_1,\ldots,L_4)\|\M\|$, we see that the error term in this estimate for $S_\M$ is 
as claimed in the statement of the lemma. 
Moreover, \eqref{eq:rho0} and \eqref{eq:claim} give 
\begin{align*}
\rho_0(\ma{h};\ma{D};M_1,\ldots,M_4)
&=\frac{\det
   \sfg\big(([D_1,d_1h_1],\ldots,[D_4,d_4h_4]);M_1,\ldots,M_4\big)}{\det
\sfg(\ma{D};M_1,\ldots,M_4)} \\ &=\rho_0(\ma{h};\ma{D};L_1,\ldots,L_4),
  \end{align*}
for any $\ma{h}\in \N^4$ such that $2\nmid h_1\cdots h_4$. 
Hence $\sigma_p'= \sigma_p$.

In order to complete the proof of Lemma \ref{lem:linear} it remains to
establish \eqref{eq:claim}.  For any matrix $\ma{N}\in
\mathrm{GL}_2(\Z)$ and any lattice $\sfl\subseteq \Z^2$, it is easily checked
that
$$\det (\ma{N}\sfl) = \det \ma{N} \det \sfl,
$$
where $\ma{N}\sfl:=\{\ma{N}\x: \x \in \sfl\}$. It therefore follows
that
$$
\det \sfl_{\ma{M}}= \frac{\det (\ma{M}\sfl_\ma{M})}{\det \M}.
$$
Note that $\ma{M}\sfl_{\ma{M}}=\mathsf{M}\cap
\sfg(\ma{D};L_1,\ldots,L_4)$,
where $\mathsf{M}=\{\M\y: \y\in\Z^2\}$. In particular we have $\det
\mathsf{M}=\det \M$.  To establish \eqref{eq:claim}, it therefore suffices  to
show that
$$
\det (\mathsf{L}\cap
\sfg(\ma{D};L_1,\ldots,L_4))=(\det \mathsf{L}) (\det \sfg(\ma{D};L_1,\ldots,L_4))
$$
for any lattice $\mathsf{L} \subseteq \Z^2$ such that
$\hcf(\det\mathsf{L}, D_1D_2D_3D_4)=1$.
But this follows immediately from Lemma \ref{lem:group}, since the
determinant of a sublattice of $\Z^2$ is equal to its index
in $\Z^2$.
\end{proof}

\section{Proof of Theorem \ref{main1}}\label{lattices}

We are now ready establish the statement of Theorem
\ref{main1}. The proof will be in two stages: first we will establish the result for
$j=*$, and then we will proceed to handle the cases $j\in \{0,1\}$.
Our proof of the estimate for $j=*$ is actually a straightforward
generalisation of an argument already present in Heath-Brown's work
\cite[\S 7]{h-b03}, but we will include full details here for
the sake of completeness.

Assume that $(\ma{d},\ma{D})\in \mcal{D}$, where
$\mcal{D}$ is given by \eqref{eq:D}. In particular it follows that
there exists $\x\in\sfg_{\ma{D}}$ such that $x_1\equiv 1 \bmod{4}$,
where $\sfg_{\ma{D}}$ is given by \eqref{eq:lattice}.
Indeed, the vector $\x=D_1^2D_2^2D_3^2D_4^2(1,1)$ is clearly 
satisfactory. 
In estimating $S_{*}(X;\ma{d},\sfg_{\ma{D}})$, our goal is to replace the summation
over lattice points $\x \in \sfg_{\ma{D}}$ by a summation over all
integer points restricted to a certain region.  
Given any basis $\ma{e}_1, \ma{e}_2$ for $\sfg_{\ma{D}}$, let 
$M_i(\v)$ be the linear form obtained from
$d_i^{-1}L_i(\x)$ via the change of variables $\x\mapsto v_1\ma{e}_1+v_2\ma{e}_2$.
We claim that there is a choice of basis such that
\begin{equation}\lab{20-trans'}
M_i(\v)\eqm{v_1}{4},
\end{equation}
for each $i$, and also
\begin{equation}
  \label{eq:3011.3}
  \|\M\|\ll \det \sfg_{\ma{D}},
\end{equation}
where $\M$ denotes the matrix formed from the basis vectors
$\ma{e}_1,\ma{e}_2$.  To check the claim we let $\ma{e}_1, \ma{e}_2$
be a minimal basis for $\sfg_{\ma{D}}$.
Thus we may assume that 
\begin{equation}
  \label{eq:3011.2}
|\ma{e}_1||\ma{e}_2| \ll \det \sfg_{\ma{D}}.
\end{equation}
Now there must exist integers $w_1,w_2$ such that
$
w_1 e_{11}+ w_2 e_{21} \equiv 1 \bmod{4},
$
since we have seen that there exists $\x\in\sfg_{\ma{D}}$ such that $x_1\equiv 1 \bmod{4}$.
In particular we may assume without loss of generality that $e_{11}$ is
odd, and after multiplying $\ma{e}_1$ by $\pm 1$, we may as well
assume that $e_{11}\equiv 1 \bmod{4}$.  Next, on replacing $\ma{e}_2$ by
$\ma{e}_2-k\ma{e}_1$ for a suitable integer $k\in \{0,1,2,3\}$, we may further assume
that $4\mid e_{21}$.   In view of \eqref{eq:3011.2}, this basis certainly satisfies \eqref{eq:3011.3}.
Moreover, the normalisation conditions on $L_1,\ldots,L_4$ imply that
$$
d_iM_i(\v)= L_i(v_1\ma{e}_1+v_2 \ma{e}_2)
\equiv d_i(v_1e_{11}+v_2 e_{21}) \equiv d_iv_1 \pmod{4},
$$
which therefore establishes \eqref{20-trans'} since each $d_i$ is odd.

Note that we must sum only over odd values of $v_1$, since we 
have been summing over odd $x_1$ in 
$S_{*}(X;\ma{d},\sfg_{\ma{D}})$. 
On recalling the definition \eqref{eq:2311.6} of  $\mcal{R}_{\ma{M}}$,  
we may therefore deduce that
\begin{align*}
S_{*}(X;\ma{d},\sfg_{\ma{D}})
&=\sum_{\colt{\v \in \Z^2\cap X\mcal{R}_{\ma{M}}}{2\nmid v_1}}
r\big(M_1(\v)\big)\cdots r\big(M_4(\v)\big).
\end{align*}
Note that \eqref{20-trans'} holds by construction, and also $M_i(\v)>0$
  for every $\v$ in the summations.
We are therefore in a position to apply Theorem \ref{main0} to
estimate this quantity. 
In view of \eqref{eq:3011.3} and the fact that $\det \sfg_{\ma{D}}\mid
D=D_1\cdots D_4$, we may deduce that
$$
L_\infty(M_1,\ldots,M_4)\leq \|\ma{M}\| L_\infty(L_1,\ldots,L_4) \ll
D L_\infty, 
$$
where $L_\infty=L_\infty(L_1,\ldots,L_4)$, as 
usual. Next we deduce from \eqref{eq:3011.3} that
$$
r_\infty(\mcal{R}_{\ma{M}})\leq \frac{\|\ma{M}\|}{|\det \ma{M}|} r_\infty(\mcal{R}) \ll
r_\infty(\mcal{R})=r_\infty,
$$
since $|\det \ma{M}|=\det \sfg_{\ma{D}}$, and furthermore
$$
r'(M_1,\ldots,M_4,\mcal{R}_{\ma{M}})=r'(L_1,\ldots,L_4,\mcal{R})=r'.
$$
Moreover, it is clear that $\meas(\mcal{R}_{\ma{M}})=\meas(\mcal{R})/|\det
\ma{M}|$. It therefore follows
from Theorem \ref{main0} that
$$
S_{*}(X;\ma{d},\sfg_{\ma{D}})
= \frac{4\pi^4 \meas(\mcal{R})}{\det \sfg_{\ma{D}}}X^2 \prod_{p>2}\sigma_p^* +
O\Big(\frac{D^\ve L_\infty^{ \varepsilon}r_\infty r'X^2}{(\log X)^{ \eta-\varepsilon}}\Big),
$$
where $\sigma_p^*$ is given by \eqref{eq:sig*}, but with $\rho_*(\ma{h})=
\det\sfg(\ma{h};M_1,\ldots,M_4)$. To calculate this quantity we note
that it is just  the index of
$$
\sfl_1=\{\x=v_1\ma{e}_1+v_2\ma{e}: \v\in\Z^2, h_i\mid M_i(\v)\}
$$
in
$
\sfl_2=\{\x=v_1\ma{e}_1+v_2\ma{e}: \v\in\Z^2\},
$
whence
\begin{align*}
\rho_*(\ma{h})=[\sfl_1:\sfl_2]=
\frac{\det\sfl_1}{\det \sfl_2}
&=\frac{\det \{\x\in\sfg(\ma{D};L_1\ldots,L_4): d_ih_i \mid
L_i(\x)\}}{\det \sfg(\ma{D};L_1,\ldots,L_4)}\\
&=\rho_0(\ma{h};\ma{D};L_1,\ldots,L_4),
\end{align*}
in the notation of \eqref{eq:rho0}.  This therefore establishes the
estimate in Theorem~\ref{main1} when $j=*$.

In order to complete the proof of Theorem \ref{main1} it remains
  to handle the cases $j=0,1$. For this we carry out the change of
  variables $\x=\M\y$, with 
$$
\M=\Big(
\begin{array}{cc}
1&0\\
j &2
\end{array}
\Big).
$$
This has the effect of transforming the sum into one over integers
$\y$ such that $y_1$ is odd, without any restriction on $y_2$.
Moreover, it is clear that 
$L_i(\M\y)=L_i(y_1,jy_1+2y_2)\equiv d_i y_1 \bmod{4}$, 
so that together with $\mcal{R}$, the new linear forms satisfy 
\textsf{NH}$_0(\ma{d})$. 
Since we have already seen that Hypothesis-$(*,0)$ holds, we may
deduce from Lemma~\ref{lem:linear} that
$$
S_{j}(X;\ma{d},\sfg_{\ma{D}})=
\frac{\del_{*,0}(\A\M)C_0 }{2}X^2
+ 
O\Big( 
\frac{ D^\ve L_\infty^{ \varepsilon} r_\infty   r'X^2}{(\log
X)^{\eta-\varepsilon}}\Big),  
$$
for $j=0,1$, where $C_0$ is given by \eqref{eq:constant}.
The statement of Theorem \ref{main1} follows 
since $\del_{*,0}(\A\M)=\del_*=4$, by \eqref{eq:C}.

\section{Proof of Theorem \ref{main2}}\label{s:t2}

We are now ready to establish Theorem \ref{main2}.
 Let $(j,k)\in
\{*,1,2\}\times \{1,2\}$ and let $(\ma{d},\ma{D})\in \mcal{D}$.
It will ease notation if we write
$S_{j,k}(X)$ to denote the sum $S_{j}(X;\ma{d},\sfg_{\ma{D}})$, when
$L_1,\ldots,L_4,\mcal{R}$ are assumed to satisfy \textsf{NH}$_k(\ma{d})$. 
Furthermore, let us write
\begin{equation}
   \label{eq:sab}
\mcal{S}_{\al}:=\{\y\in \Z^2: ~y_1 \equiv  1 \bmod{4}, ~y_2
\equiv \al \bmod{2}\},
\end{equation}
for $\al \in \{*,0,1 \}$.
We begin by showing how an estimate for $k=1$ can be used to
deduce a corresponding estimate for the case $k=2$.  

Suppose that $k=2$ and $j=1$.  We may clearly assume that the summation in
$S_{1,2}(X)$ is only over values of $x_1\equiv 1
\bmod{4}$ and $x_2 \equiv d_2 
\bmod{4}$, since the summand vanishes unless
$$ 
d_1x_1 \equiv 2^{-k_1}L_1(\x) \equiv d_1  \pmod{4},\quad
x_2 \equiv  2^{-k_2}L_2(\x) \equiv d_2 \pmod{4}.
$$ 
Write $\kappa=\pm 1$ for the residue modulo  $4$ of $d_2$, and 
choose an integer $c$ such that
$$
a_j+ b_j(\kappa+4c) \neq 0,
$$
for $j=3,4$, where $a_j,b_j$ are as in \eqref{L3L4}.
This is plainly always possible with $c\in \{0,1,2\}$.
We will carry out the transformation $\x=\M_{c,d_2}\y$, with $\M_{c,d_2}$ given by \eqref{defM}.
Such a transformation is valid if and only if there exists an integer
$y_2$ such that 
$x_2-(\kappa+4c)x_1=4y_2$ where $\kappa  \equiv d_2 \bmod 4$. Thus the
transformation is certainly valid for 
$x_1\equiv 1 \bmod{4}$ and $x_2 \equiv d_2  \bmod{4}$, bringing
the linear forms into new forms
$M_i(\y)=L_i(\M_{c,d_2}\y)$, say. It is not hard to see that
$M_1,\ldots,M_4,\mcal{R}$ will satisfy \textsf{NH}$_1(\ma{d})$.
 There is now no $2$-adic restriction on $y_2$,
so that the summation is over $\y\in \SS_{*}$, in the notation of
\eqref{eq:sab}. We clearly have  $r_\infty(\mcal{R}_{\M_{c,d_2}})\ll r_\infty(\mcal{R} ).$
  By combining Lemma \ref{lem:linear} with the assumption that Hypothesis-$(*,1)$
holds, we therefore obtain 
$$
S_{1,2}(X)
=\frac{\delta_{*,1}(\A\M_{c,d_2})C_0}{4}X^2 
+O\Big( 
\frac{ D^\ve L_\infty^{ \varepsilon} r_\infty  r'X^2}{(\log
X)^{\eta-\varepsilon}}\Big),
$$
where $C_0$ is given by \eqref{eq:constant}.  This is clearly
satisfactory for the statement of Theorem \ref{main2}, since \eqref{eq:1811.2} yields
$\delta_{1,2}(\A)=\delta_{*,1}(\A\M_{c,d_2})/4$. 

To handle $S_{0,2}(X)$ we will need to extract $2$-adic powers from
the variable $x_2$. Accordingly, we write $x_1=y_1$ and $x_2=2^\xi
y_2$, 
for $\xi\geq 1$ and $y_2\equiv 1 \bmod{2}$. This corresponds to the transformation
$\x=\M_\xi\y$ with $\M_\xi$ given by \eqref{defMxi}.
The resulting linear forms $M_i(\y)=L_i(\M_\xi\y)$  will continue to satisfy
\textsf{NH}$_2(\ma{d})$, 
 and the summation will be over $\y \in \SS_1$.  Moreover, the
restriction $\x \in X\mcal{R}$ in the definition of $S_{0,2}(X)$ forces the upper bound
$\xi \leq  \log(r_\infty X)$.  It turns that this is too crude for our purposes and we
must work a little harder to control the contribution from large values of $\xi$.
Recall the definitions \eqref{eq:Linf}, \eqref{eq:rinf} of $L_\infty$
and $r_\infty$. We will show that 
\begin{equation}\label{queue}
\sum_{\colt{\y \in \Z^2}{\M_\xi \y \in \mcal{X}}}
r\Big(\frac{L_1(\ma{M}_\xi\ma{y} )}{d_1}\Big)
r\Big(\frac{L_4(\ma{M}_\xi\ma{y})}{d_4}\Big)
\ll  (D2^{\xi}L_\infty)^{\varepsilon} 
 \Big(r_\infty^2
\frac{ X^2}{2^{\xi }}
+  r_\infty^{1+\ve} X^{1+\varepsilon }\Big).
\end{equation}
Define the multiplicative function $r_1$ via
$$
r_1(p^\nu)=\left\{
\begin{array}{ll}
1+\nu, &\mbox{if $p\mid d_1d_2d_3d_4$,}\\
r(p^\nu), &\mbox{if $p\nmid d_1d_2d_3d_4$,}
\end{array}
\right.
$$
for any prime power $p^\nu$.
Then we have
$$
r\Big(\frac{L_1(\ma{M}_\xi\ma{y} )}{d_1}\Big)
\cdots
r\Big(\frac{L_4(\ma{M}_\xi\ma{y})}{d_4}\Big) 
\leq r_1(F(\ma{y})), 
$$
where $F(\ma{y})=L_1(\ma{M}_\xi\ma{y} ) \cdots L_4(\ma{M}_\xi\y)$. 
The maximum modulus of the coefficients of this binary form is 
$O(L_\infty^4 2^{4\xi}).$ Hence \eqref{queue} follows easily on taking $X_1=r_\infty X$ and
$X_2=2^{-\xi}r_\infty X$ in \cite[Corollary 1]{nair}.
Note that it would not be sufficient to work instead with the trivial upper bound 
$O(L_\infty^\varepsilon r_\infty^{2+\varepsilon }2^{-\xi}X^{2+\varepsilon})$.

To complete our estimate for $S_{0,2}(X)$ we will combine Lemma \ref{lem:linear} with Hypothesis-$(1,2)$
to handle the contribution from $\xi\leq \xi_1$, 
and we will use \eqref{queue} to handle the contribution from $\xi_1 
 <\xi \leq \log (r_\infty X)$, for a value of $\xi_1$  to be  determined.  
We claim that
\begin{equation}\label{eq:2411.2}
r_\infty\leq 2 L_\infty r'.
\end{equation}
To see this, suppose that $\z\in \RR$ is such that
$r_\infty=|z_1|$, say. Then it follows that
$$
r_\infty \leq |a_3b_4-a_4b_3||z_1| = |b_4L_3(\z)-b_3L_4(\z)| \leq
2L_{\infty} r',
$$
in the notation of \eqref{L3L4}. Write 
$$
E_1=\frac{  2^{ \varepsilon \xi} X^2}{(\log
X)^{\eta-\varepsilon}}, \quad
E_2= L_\infty 2^{-\xi+\varepsilon \xi}X^2+
{r'}^{\ve}2^{\varepsilon\xi}X^{1+\varepsilon },
$$  
and choose $\xi_1\in\N$ such that   
$2^{\xi_1-1}<L_\infty (\log X)^{\eta}\leq 2^{\xi_1}$. 
Next we note that 
$$
C_0\ll D^\ve\frac{r_\infty^2}{\det \sfg_{\ma{D}}}\ll
D^\ve L_\infty r_\infty r',
$$
in \eqref{eq:constant}. Hence we deduce from \eqref{eq:1811.1} and \eqref{eq:2411.2} that
\begin{align*}
S_{0,2}(X)
=&\sum_{\xi =1}^{\xi_1} \frac{\delta_{1,2}(\A\M_\xi) C_0 }{2^{\xi}}X^2 
+ 
O\Big(D^\ve L_\infty^{ \varepsilon} r_\infty r'
\big(\sum_{\xi=1}^{\xi_1} 
 E_1 +
\hspace{-0.2cm}
\sum_{\xi=\xi_1+1}^{\log(r_\infty X)}
 E_2 \big)\Big)\\
\\
=&\sum_{\xi=1}^{\infty} \frac{\delta_{1,2}(\A\M_\xi) C_0 }{2^{\xi}}X^2 +
O\Big(\frac{D^\ve  L_\infty^{\varepsilon} r_\infty r' X^2}{(\log
X)^{\eta-\varepsilon}}\Big)\\
=&\delta_{0,2}(\A) C_0 X^2+ 
O\Big(\frac{D^\ve  L_\infty^{\varepsilon} r_\infty r' X^2}{(\log
X)^{\eta-\varepsilon}}\Big).
\end{align*}
This completes the treatment of $S_{0,2}(X)$.

The estimate for $S_{*,2}(X)=S_{0,2}(X)+S_{1,2}(X)$ 
is now an immediate consequence of our
estimates for $S_{0,2}(X)$ and $S_{1,2}(X)$.
Indeed we plainly have
\begin{align*}
\delta_{*,2}(\A)=\delta_{0,2}(\A)+\delta_{1,2}(\A)
=\sum_{\xi=0}^{\infty} \frac{\delta_{1,2}(\A\M_\xi) }{2^{\xi}}.
\end{align*}
The argument that we have presented here makes crucial use of our
previous work \cite{nair} to control the contribution from large
values of $\xi$ that feature in the change of variables. This
basic technique will recur at several points in the proof of Theorem
\ref{main2}.   Rather than repeating the exact same details
each time, however, we will merely refer the reader back to \eqref{queue} in
order to draw attention to this basic chain of reasoning.

Let $j\in \{*,0,1\}$.
It remains to estimate  $S_{j,1}(X)$. In fact it will suffice to deal only with the case
$j=1$. Indeed, the remaining cases are  handled just as above,
leading to \eqref{eq:1811.1} in the case $k=1$.
Assume that $L_1,\ldots,L_4,\mcal{R}$ satisfy \textsf{NH}$_1(\ma{d})$. 
We have
$$
S_{1,1}(X)=\sum_{\x\in \SS_1\cap \mcal{X}}
r\Big(\frac{L_1(\x)}{d_1}\Big)
r\Big(\frac{L_2(\x)}{d_2}\Big)r\Big(\frac{L_3(\x)}{d_3}\Big)
r\Big(\frac{L_4(\x)}{d_4}\Big),
$$
where $\SS_1$ is given by \eqref{eq:sab} and $\mcal{X}=\sfg_{\ma{D}}\cap X\mcal{R}$.
Let us write $S(X)=S_{1,1}(X)$ for short. Our aim is to find
a linear change of variables
$\x=\M\y,$ for some $\M\in \mathrm{GL}_2(\Z)$, 
taking the linear forms $L_i$ into forms
$M_i(\y)=L_i(\M\y)$ such that
\begin{equation}
   \label{eq:necc}
2^{-\ell_i}M_i(\y)\equiv d_i y_1 \pmod{4},
\end{equation}
for certain  $\ell_i\in \Z_{\geq 0}$.
On setting $M_i'=2^{-\ell_i}M_i$, so that $M_1',\ldots,M_4'$
satisfy \textsf{NH}$_0(\ma{d})$, 
we will then be in a position to apply Lemma
\ref{lem:linear} under the assumption that Hypothesis-$(j,0)$ holds for
$j\in \{*,0,1\}$. Indeed, we have already seen that Theorem
\ref{main1} holds in the previous section.

Let $\x \in \SS_1\cap \mcal{X}$, so that $x_1 \equiv 1 \bmod{4}$ and $2\nmid
x_2$. Recall the assumption that \eqref{eq:L34} holds for appropriate
$k_j,a_j',b_j',\mu_j,\nu_j$. At certain points of the argument we will find it convenient to extract
$2$-adic factors from the terms $2^{-k_j}L_j(\x)$. Let us write
\begin{equation}
  \label{eq:extract2}
{\xi_j}=\nu_2\big(2^{-k_j}L_j(\x)\big), 
\end{equation}
for $j=3,4$. This will allow certain linear transformations to take
place, and it turns  
out that the matrices needed to bring $L_i$ in line with
\eqref{eq:necc} will all take the shape
\begin{equation}
  \label{eq:base-matrix}
\M=\Big(
\begin{array}{cc}
1&0\\
A &2^{\xi+2}
\end{array}
\Big),
\end{equation}
for appropriate non-negative integers  $A\in [0,2^{\xi+2})$ 
 and $\xi$.  Here $\xi$ will be a simple function of $\xi_3$ and $\xi_4$.
Assuming that we are now in a position to combine 
Lemma \ref{lem:linear} with Hypothesis-$(j,0)$, 
we will then obtain a contribution
\begin{equation}\label{eq:e-game}
\begin{split}
&=\frac{ \delta_{j,0}(\A\M) C_0 }{2^{\xi+2}} X^2  
+O\Big(\frac{D^\ve  L_\infty^{ \varepsilon} r_\infty r' 2^{\xi\varepsilon}X^2}{(\log
X)^{\eta- \varepsilon}}\Big)\\
&=\frac{ \delta_{j}C_0 }{2^{\xi+2}} X^2
+O\Big(\frac{D^\ve  L_\infty^{ \varepsilon} r_\infty r'2^{\xi\varepsilon} X^2}{(\log
X)^{\eta- \varepsilon}}\Big),
\end{split}
\end{equation}
since \eqref{eq:C} implies that $\delta_{j,0}(\ma{B})=\delta_j$, and furthermore,
$$
r_\infty(\mcal{R}_\M) \leq 
\frac{\|\ma{M}\|}{\det \ma{M}}
r_\infty(\mcal{R}) = r_\infty(\mcal{R})=r_\infty.
$$ 
Finally, we will need to sum this quantity
over all available $\xi_3,\xi_4$. It is here that we must return to
\eqref{queue} and repeat the sort of argument used there to handle
the large values of $\xi_3$ and $\xi_4$.

Under any transformation $\x=\M\y$, with $\M$ taking the shape
\eqref{eq:base-matrix}, it follows from condition (iv$'$)$_{\ma{d}}$ in the introduction that
$$
2^{-k_j}L_j(\M\y)\equiv d_jy_1 \pmod{4}
$$
for $j=1,2$. As long as our transformations have this general shape
therefore, we will be able to focus our attention on the effect that
the transformation has on the linear forms $L_3,L_4.$
Unfortunately, bringing these forms into the required shape isn't entirely
straightforward, and the permissible choice of $\M$ depends
intimately upon the values of $a_j',b_j',\mu_j,\nu_j$ in \eqref{eq:L34}.
We may assume that these constants satisfy
\eqref{eq:aibi} and \eqref{eq:munu}, and we proceed to consider a
number of distinct subcases separately.

\subsection{The case $\max\{\mu_3,\nu_3\}\geq 1$ and $\max\{\mu_{4},\nu_4\}\geq 1$} 

This case is equivalent to the case in which precisely two of the
exponents $\mu_3,\mu_4,\nu_3,\nu_4$ are non-zero, which in turn is
equivalent to the statement that $\mu_j+\nu_j\geq 1$ for $j=3,4$, since $\mu_3\nu_3=\mu_4\nu_4=0$.
In particular it follows that $2^{-k_j}L_j(\x)$ is odd for any odd values of $x_1,x_2$.
Recall that  the
summation is over $x_1\equiv  1 \bmod{4}$  
and $x_2$ odd in $S(X)$.
Let us write $g$ for the number of values of $\gamma \in \{-1,1\}$
such that
\begin{equation}
  \label{eq:1611.1}
2^{-k_j}L_j(1,\gamma) =2^{\mu_j}a_j' +  2^{\nu_j}b_j'\gamma \equiv d_j \pmod{4} 
\end{equation} 
for $j=3$ and $4$. Our aim is to show that 
\begin{equation}\label{eq:d11}
\del_{1,1}(\A)=g,
\end{equation}
which we claim is satisfactory for \eqref{delta1}--\eqref{delta3}.
To see this, we suppose first that $\nu_3,\nu_4\geq 1$. Then it is clear that 
$g=2$ if $a_j'$ is congruent to $ d_j-2^{\nu_j}$ modulo $4$ for $j=3,4$, and $g=0$
otherwise.  When $\mu_3,\mu_4\geq1$, we have $g=1$ if
$b_3'd_3-2^{\mu_3}\equiv
b_4'd_4-2^{\mu_4} \bmod 4$, and $g=0$ otherwise.
When $\mu_4,\nu_3\geq 1$ we have $g=1$ when 
$a_3'\equiv d_3-2^{\nu_3} \bmod{4}$, the value of $\gamma$ being given by
the residue of $b_4'd_4-2^{\mu_4}$ modulo $4$, and $g=0$ otherwise.
Finally, the case $\mu_3,\nu_4\geq 1$ is symmetric.

It remains to establish \eqref{eq:d11}. 
We may clearly proceed under the assumption that $g\geq 1$.
Let us write $S(X)=\sum_{\gamma} S(X;\gamma)$, where $S(X;\gamma)$ is the overall
contribution to $S(X)$ from vectors such that $x_2 \equiv \gamma  
 \bmod{4}$,
and the summation is over the $g$ values of $\gamma$ for which \eqref{eq:1611.1} holds.
We will carry out the transformation
$$
\M=\Big(
\begin{array}{cc}
1&0\\
\gamma &4
\end{array}
\Big).
$$
This transformation is valid if and only if there exists an integer
$y_2$ such that $x_2=\gamma y_1+4y_2$, for each $\x$ in $S(X)$.  This is
clearly true for $x_1=y_1 \equiv  1\bmod{4}$ and $x_2 \equiv \gamma  \bmod{4}$.
Next we observe that \eqref{eq:necc} holds for the
new linear forms $M_i(\y)=L_i(\M\y)$, since \eqref{eq:1611.1} holds
for $j=3,4$. The summation over $\y$ is now over $\y\in
\mcal{S}_{*}$, since as usual the condition $y_1 \equiv 1 \bmod{4}$ 
is
automatic for odd values of $y_1$ such that $r(M_1(\y)/d_1)\neq 0$.
In line with \eqref{eq:e-game}, we therefore deduce from Lemma
\ref{lem:linear} and Hypothesis-$(*,0)$ that 
\begin{align*}
S(X;\gamma)
&=
\frac{ \delta_{*}C_0 }{4} X^2  +O\Big(\frac{D^\ve  L_\infty^{\varepsilon} r_\infty r' X^2}{(\log
X)^{\eta- \varepsilon}}\Big)
=C_0 X^2 +O\Big(\frac{D^\ve  L_\infty^{ \varepsilon} r_\infty r' X^2}{(\log
X)^{\eta- \varepsilon}}\Big), 
\end{align*}
when $\gamma$ is admissible.  We complete the proof of \eqref{eq:d11} by summing over
the $g$ admissible  choices for $\gamma$.

\subsection{The case $\mu_3=\mu_4=0$ and $\max\{\nu_3,\nu_4\}\geq 1>
\min\{\nu_3,\nu_4\}=0$}

For reasons of symmetry we may restrict ourselves to the case 
$\nu_3\geq 1$ and $\nu_4=0$. For $\x \in \SS_1\cap \mcal{X}$ the term
$2^{-k_3}L_3(\x)$ is odd, whereas $2^{-k_4}L_4(\x)$  is always
even. We note that $r(L_3(\x)/d_3)$  is non-zero if and only if
$a_3'\equiv d_3-2^{\nu_3} \bmod 4$.  We must show  that \eqref{delta4}
holds with $(j_1,j_2)=(4,3)$.

Let us write $\xi_4=\nu_2(2^{-k_4}L_4(\x))$, as in
\eqref{eq:extract2}. Then necessarily $\xi_4\geq1$, since 
$\x\in \SS_1$. We now see that in order for $r(2^{-k_4-\xi_4}L_4(\x)/d_4)$
to be non-zero, it is necessary and sufficient that 
\begin{equation}
  \label{eq:1711.1}
x_2\equiv(d_42^{\xi_4}-a'_4x_1)\overline{b_4'} 
\equiv(d_42^{\xi_4}-a'_4)\overline{b_4'}x_1  \pmod {2^{{\xi_4}+2}},
\end{equation}
where $\overline{b_4'}$ is the multiplicative inverse of $b_4'$ modulo $2^{{\xi_4}+2}$.
Here, we have used that the fact $x_1\equiv  1 \bmod{4}$ in the
summation over $\x$. For each  ${\xi_4}\geq 1 $ 
we make the transformation
\begin{equation}
  \label{eq:0112.1}
\M=\Big(
\begin{array}{cc}
1&0\\
A &2^{{\xi_4}+2}
\end{array}
\Big),
\end{equation}
where $A\in [0,2^{{\xi_4}+2}) $ denotes the residue of
$(d_42^{\xi_4}-a'_4)\overline{b_4'}$ modulo $2^{{\xi_4}+2}$. This
brings $L_3, L_4$ into a satisfactory shape for
\textsf{NH}$_0(\ma{d})$, by which we mean that
$2^{-k_3}L_3(\M\y) \equiv d_3y_1 \bmod{4}$ and 
$2^{-k_4-\xi_4}L_4(\M\y) \equiv d_4y_1 \bmod{4}$.
Moreover, the summation is now over
$\y\in \SS_*$. 
In line with \eqref{eq:e-game}, and using the estimate \eqref{queue} to
handle large values of $\xi_4$, we therefore deduce from Lemma
\ref{lem:linear} and Hypothesis-$(*,0)$ that 
\begin{align*}
S(X)&=
\sum_{\xi_4=1}^\infty \frac{ \delta_{*}C_0 }{2^{\xi_4+2}} X^2
+O\Big(\frac{D^\ve  L_\infty^{ \varepsilon} r_\infty r' X^2}{(\log
X)^{\eta- \varepsilon}}\Big)
=C_0 X^2 
+O\Big(\frac{D^\ve  L_\infty^{ \varepsilon} r_\infty r' X^2}{(\log
X)^{\eta- \varepsilon}}\Big). 
\end{align*}
Thus $\del_{1,1}(\A)=1$ when $a_3'\equiv d_3 -2^{\mu_3}\bmod 4$, 
as claimed in \eqref{delta4}.

\subsection{The case $\nu_3=\nu_4=0$ and $\max\{\mu_3,\mu_4\}\geq 1>
\min\{\mu_3,\mu_4\}=0$}

The treatment of this case runs parallel to the previous section. For 
reasons of symmetry we may restrict ourselves to the case 
$\mu_3\geq 1$ and $\mu_4=0$. For $\x \in \SS_1\cap \mcal{X}$ the term
$2^{-k_3}L_3(\x)$ is odd, whereas $2^{-k_4}L_4(\x)$  is always
even. We now observe that $r(L_3(\x)/d_3)$  is non-zero if and only if
$x_2\equiv b_3'd_3-2^{\mu_3} \bmod 4$.  Our task is to show that
\eqref{delta5} holds.

Let us write $\xi_4=\nu_2(2^{-k_4}L_4(\x))\geq 1$. Arguing as above we see that in order for 
$r(2^{-k_4-\xi_4}L_4(\x)/d_4)$
to be non-zero, it is necessary and sufficient that \eqref{eq:1711.1}
holds. In particular we must take care to sum only over
those $\xi_4$ for which 
\begin{equation}
  \label{eq:1611.2}
a_4' +b_3'b_4'd_3\equiv 2^{\mu_3}+2^{\xi_4} \pmod{4}.  
\end{equation}
For each such ${\xi_4}$ 
we make the transformation \eqref{eq:0112.1} as above, which
again brings $L_3,L_4$ into a satisfactory shape for
\textsf{NH}$_0(\ma{d})$, 
and the summation is over $\y\in \SS_*$. 
We may now deduce from Lemma
\ref{lem:linear} and Hypothesis-$(*,0)$, together with the argument
involving \eqref{queue}, that 
$$
S(X)=
\sum_{\xi_4} \frac{ \delta_{*}C_0 }{2^{\xi_4+2}} X^2
+O\Big(\frac{D^\ve  L_\infty^{ \varepsilon} r_\infty r' X^2}{(\log
X)^{\eta- \varepsilon}}\Big), 
$$
where the sum is over $\xi_4\geq 1$ such that \eqref{eq:1611.2}
holds. If $a_4' +b_3'b_4'd_3-2^{\mu_3} \equiv 2 \bmod{4}$, 
then we must restrict attention to the single value $\xi_4=1$, which gives
$\del_{1,1}(\A)=1/2$. If however $a_4' +b_3'b_4'd_3-2^{\mu_3} \equiv 0
\bmod{4}$,  
then we must restrict attention to $\xi_4\geq 2$, giving 
$\del_{1,1}(\A)=\sum_{\xi_4=2}^\infty 2^{-\xi_4}=1/2.$
This therefore confirms \eqref{delta5}.

\subsection{The case $\mu_3=\nu_3=\mu_4=\nu_4=0$}

We reason in an analogous manner to the previous sections. 
Our valuation of $\delta_{1,1}(\A)$ will depend on the $2$-adic
valuation $v$ of $a'_3b'_4-a'_4b'_3$, as defined in \eqref{eq:v}. 
Our aim is to show that \eqref{delta6} holds.

Let  $\x \in \SS_1\cap \mcal{X}$, and introduce parameters $\xi_3,\xi_4\geq
1$ such that \eqref{eq:extract2} holds for $j=3,4$. 
Let us deal with the case $\xi_4\geq \xi_3$.
The system 
$$
a'_3x_1+b_3'x_2\equiv 0 \pmod{2^{\xi_3}},\quad a'_4x_1+b_4'x_2\equiv 0
\pmod{2^{\xi_4}}
$$ 
is equivalent to 
$$
(a'_3b'_4-a'_4b'_3)x_1\equiv 0 \pmod{2^{\xi_3}},\quad a'_4x_1+b_4'x_2 \equiv 0
\pmod{2^{\xi_4}}.
$$
Let us write $a'_3b'_4-a'_4b'_3=2^{v}c_{34}$, with $c_{34}$ odd.  We
clearly have $\xi_3\leq v$. Moreover, the term $r( 2^{-k_4-\xi_4}L_4(\x)/d_4)$ is
non-zero if and only if \eqref{eq:1711.1}
holds.  Assuming this to be the case, we must therefore have
$$ 
a'_3x_1+b_3'x_2\equiv 
\big(a_3'+b_3'\overline{b_4'}( d_42^{\xi_4}-a_4')\big)x_1
\equiv 
\overline{b_4'}c_{34}2^v+ b_3'\overline{b_4'} d_4 2^{\xi_4}
\pmod{2^{\xi_3+2}}.
$$
Provided that
\begin{equation}
  \label{eq:1711.2}
 \overline{b_4'}c_{34}2^v+ b_3'\overline{b_4'} d_4 2^{\xi_4} 
\equiv 2^{\xi_3} d_3\pmod{2^{\xi_3+2}},   
\end{equation}
therefore, it follows that we may again carry out the transformation 
\eqref{eq:0112.1} to bring  $L_3,L_4$ into a satisfactory shape for
\textsf{NH}$_0(\ma{d})$. The summation is now over $\y\in \SS_*$.
We easily deduce from Lemma \ref{lem:linear} and Hypothesis-$(*,0)$
that there is the contribution 
$$
\frac{ \delta_{*}C_0 }{2^{\xi_4+2}} X^2
+ O\Big(\frac{D^\ve  L_\infty^{ \varepsilon} r_\infty r' 2^{\varepsilon
    \xi_4} X^2}{(\log X)^{\eta- \varepsilon}}\Big),
$$
for fixed $1\leq \xi_3\leq \xi_4$ such that \eqref{eq:1711.2}
holds.  
Using an estimate of the type \eqref{queue}, it is an easy
matter to deduce that the overall contribution to the error in summing
over the available $\xi_3,\xi_4$ is 
$ O\big( {D^\ve  L_\infty^{ \varepsilon} r_\infty r' X^2}{(\log
X)^{-\eta +\varepsilon}}\big)$. 
Moreover, we deduce that
$$
\del_{1,1}(\A)=
\sum_{\xi_3=\xi_4} \frac{1}{2^{\xi_4}}+2
\sum_{\xi_3<\xi_4} \frac{1}{2^{\xi_4}}, 
$$
for a summation over $\xi_3,\xi_4\geq 1$ such that \eqref{eq:1711.2} holds. To evaluate
this quantity we  consider a number of subcases, beginning
with the contribution from $\xi_3=\xi_4$.  Then we must have
$1\leq \xi_3\leq v-1$ and $b_3'\overline{b_4'}d_4+2^{v-\xi_3}\equiv d_3 \bmod
4$. Let us write $W_1$ for the set of all such positive integers $\xi_3$.
Then we obtain the overall contribution
\begin{equation}
  \label{eq:1711.3}
\sum_{\xi\in W_1}
\frac{1}{2^{\xi}}=
\left\{
\begin{array}{ll}
0, &\mbox{if $v=1$},\\
1-1/2^{v-2}, &\mbox{if $v\geq 2$ and $b_3'd_3\equiv b_4'd_4 \bmod{4}$},\\
1/2^{v-1}, &\mbox{if $v\geq 2$ and $b_3'd_3\equiv -b_4'd_4    \bmod{4}$},\\ 
\end{array}
\right.
\end{equation}
Turning to the contribution from $\xi_3<\xi_4$, it follows from
\eqref{eq:1711.2} that $\xi_3=v$ and
$ \overline{b_4'}c_{34} + 2^{\xi_4-v}\equiv d_3\ \bmod 4$. 
Write $W_2$ for the set of all such vectors $(\xi_3,\xi_4)\in \N^2$.
Then a little thought reveals that we obtain a contribution
$$
2\sum_{(\xi_3,\xi_4)\in W_2}
\frac{1}{2^{\xi_4}}=\frac{1}{2^{v}}
$$
from this case.  Combining this with \eqref{eq:1711.3}, we therefore
conclude the proof of \eqref{delta6}.

\end{document}